\documentclass[11pt,a4paper]{amsart}
\usepackage{commands_Ryo, packages_Ryo,packages,commands}

\title{On perfectoidization of finite algebras over a perfectoid ring}

\author[R. Ishizuka]{Ryo Ishizuka}
\address{Department of Mathematics, Institute of Science Tokyo, 2-12-1 Ookayama, Meguro, Tokyo 152-8551}
\email{ishizuka.r.ac@m.titech.ac.jp}

\author[L. Navarro Chafloque]{Léo Navarro Chafloque}
\address{EPFL SB MATH CAG,
MA C3 585 (Bâtiment MA),
Station 8,
1015 Lausanne}
\email{leo.navarrochafloque@epfl.ch}

\thanks{2020 {\em Mathematics Subject Classification\/}: 14G45, 13J10}

\keywords{Perfectoid rings, perfectoidization, bounded torsion, almost purity theorem}

\begin{document}

\begin{abstract}
    We study general properties of the perfectoidization of finite algebras over a perfectoid ring, which helps to understand some precise and explicit descriptions.
    For example, we prove that if $A=R[t]/(m(t))$ where $m(t)$ is monic, $R$ is perfectoid and the discriminant $d$ of $m(t)$ is a non-zero divisor of $R$ satisfying a bounded torsion condition, then $dA_{\pfd}\subset A$. 
    We also prove a density criterion reducing the construction of the perfectoidization to adjoining suitable \(p\)-power roots modulo \(p\). In the second part of the paper, we compute perfectoidizations in several families of examples, including Kummer-type extensions and split finite algebras.
\end{abstract}

\maketitle

\tableofcontents

\section{Introduction}


The goal of this paper is to understand more precisely the perfectoidization of finite algebras over a perfectoid ring. More precisely, if \(R\) is a perfectoid ring and \(R\to A\) is a finite finitely presented \(R\)-algebra, then \cite[Theorem 10.9]{bhatt2022Prismsa} implies that there exists an initial perfectoid \(A\)-algebra, which we denote by \(A_{\pfd}\). The paper has two complementary aims. First, we prove general results helping to understand perfectoidizations of finite algebras. Second, we compute perfectoidizations in explicit examples.


In \Cref{first_sec} we prove some theoretical results on the topic. Namely say that $R$ is a perfectoid ring and let's consider $R\to A=R[t]/(m(t))$ where $m(t)\in R[t]$ is a monic polynomial whose discriminant (see \Cref{disc_def}) $d\in R$ is a non-zero divisor. Then using \cite[Theorem 10.9]{bhatt2022Prismsa}, $A_{\pfd}$ is discrete and $A\to A_{\pfd}$ is an isomorphism away from $d$. Typically, $A_{\pfd}$ can fail to be finite over $R$. It is not obvious which elements have to be adjoined to \(A\) in order to obtain a perfectoid ring. We prove the following theorem, which sheds some light on the above question.

\begin{thm}[\Cref{boundeddtorsmonic}, \Cref{nzd_reassuring}]\label{thm_intro_discriminant}Let $R$ be a perfectoid ring and let $R\to A=R[t]/(m(t))$ where $m(t)\in R[t]$ is a monic polynomial whose discriminant $d\in R$ is a non-zero divisor and $R/dR$ is of bounded $p^{\infty}$-torsion. Then $A_{\pfd}$ is a subring of $A[1/d]$, in particular \(d\)-torsion-free, and moreover
\[dA_{\pfd}\subseteq A.\]
Equivalently any element of $A_{\pfd}$ can be written as $\frac{a}{d}$ for some $a\in A$.
\end{thm}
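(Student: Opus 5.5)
The plan is to run the classical ``discriminant bounds the integral closure'' argument, using the trace form of the free $R$-module $A=\bigoplus_{i<n}Rt^i$.

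First I will show that $A_{\pfd}\to A_{\pfd}[1/d]$ is injective and that $A_{\pfd}[1/d]=A[1/d]$. The second point is the statement from \cite[Theorem 10.9]{bhatt2022Prismsa} that $A\to A_{\pfd}$ is an isomorphism away from $d$; more precisely, $R[1/d]\to A[1/d]$ is finite étale, so $A$ is already perfectoid on that locus. For the injectivity, I would bound the $d^\infty$-torsion of $A_{\pfd}$. The assumption that $R/dR$ has bounded $p^\infty$-torsion is exactly what is needed to make $(p,d)$-adic completions and derived completions agree. I would then argue as follows.
\begin{enumerate}
\item Since $A_{\pfd}$ is discrete and perfectoid, its $d$-power torsion $T$ is an ideal such that $A_{\pfd}/T$ is again perfectoid; here I would use that quotients of perfectoid rings by ideals stable under taking $p$-th roots stay perfectoid, together with the bounded torsion hypothesis to keep the relevant completions well behaved.
\item $A\to A_{\pfd}/T$ is a map to a perfectoid ring, so by the universal property it factors through $A_{\pfd}$. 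The resulting endomorphism of $A_{\pfd}$ is the identity, which forces $T=0$.
\end{enumerate}
This gives $A\subseteq A_{\pfd}\subseteq A[1/d]$; note that $A$ itself is $d$-torsion-free because $A$ is free over $R$ and $d$ is a non-zero divisor.

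Next comes the lattice computation. Let $\mathrm{Tr}\colon A[1/d]\to R[1/d]$ be the trace of the free $R[1/d]$-module $A[1/d]$, and set
\[A^\vee=\{x\in A[1/d]: \mathrm{Tr}(xA)\subseteq R\}.\]
The Gram matrix $(\mathrm{Tr}(t^{i+j}))$ has determinant $d$, so its adjugate shows $dA^\vee\subseteq A$. It therefore suffices to prove $A_{\pfd}\subseteq A^\vee$. Since $A_{\pfd}$ is a ring containing $A$, this reduces to showing $\mathrm{Tr}(A_{\pfd})\subseteq R$.

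The main obstacle is this trace integrality. My first approach is as follows. For $x\in A_{\pfd}$, the coefficients of its characteristic polynomial over $R[1/d]$ are symmetric functions in the ``conjugates'' of $x$. I would pass to a perfectoid cover $R\to R'$ such that $R'[1/d]\otimes_R A$ splits as $R'[1/d]^n$; for instance, adjoin the roots of $m$ and perfectoidize, which should stay faithful away from $d$. Under the $n$ projections, each conjugate of $x$ lands in an element of $R'[1/d]$ whose image factors through the perfectoid ring $R'$. Thus each conjugate is integral over $R'$ in the appropriate sense, and so $\mathrm{Tr}(x)\in R[1/d]$ is integral over $R$.

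It then remains to show that $R$ is closed in $R[1/d]$ under this kind of integrality. Here I would again use the perfectoid structure: perfectoid rings are $p$-root closed, and combined with the bounded torsion of $R/dR$ and an almost-purity/André-flatness argument, I expect $R$ to be totally integrally closed in $R[1/d]$ for elements coming from perfectoid algebras. If this direct approach fails, the fallback is to prove the trace integrality almost (with respect to $p^{1/p^\infty}$ after an André flat cover), via almost purity applied to $A_{\pfd}$, and then remove the ``almost'' using that $R$ is perfectoid and $R/dR$ has bounded torsion.
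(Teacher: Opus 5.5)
Your trace-dual reduction is sound: $G=(\mathrm{Tr}(t^{i+j}))=V^{T}V$, so $\det G=\pm d$ and $dA^\vee\subseteq A$. It is a genuinely different route from the paper's. However, the two remaining steps, $d$-torsion-freeness and trace integrality, each have a real gap.

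\emph{First gap: the $d$-torsion-freeness argument is not valid.} Applying the universal property to $A\to A_{\pfd}/T$ only gives back the quotient map $A_{\pfd}\to A_{\pfd}/T$. There is no map back, so no endomorphism of $A_{\pfd}$ appears and nothing forces $T=0$. The same reasoning would show $T=0$ whenever $P$ and $P/T$ are both perfectoid. That fails for $P=R_1\times R_2$ and $T=R_1\times 0$ with $R_1,R_2$ nonzero perfectoid; there $T$ is even the $d^\infty$-torsion for $d=(0,1)$. What you need is an injection of $A_{\pfd}$ into a ring on which $d$ is known to be a non-zero divisor. The paper obtains it as follows (see \Cref{nzd_syntomic_sections}):
\begin{enumerate}
\item By Andr\'e's lemma, choose a $p$-completely faithfully flat perfectoid $R\to R'$ over which $m$ splits.
\item Evaluation at the roots, $A'\to\prod_{i=1}^n R'$, is finite and surjective on $\Spec$, hence an arc cover. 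By \cite[Lemma 2.3]{struc_pfd} this gives $A_{\pfd}\hookrightarrow\prod R'$.
\item The bounded $p^\infty$-torsion of $R/dR$ is exactly what keeps $d$ a non-zero divisor on $R'$ \cite[Lemma 5.2]{struc_pfd}.
\end{enumerate}
That last point, not a comparison of completions, is where the bounded-torsion hypothesis enters.

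\emph{Second gap: the proof of $\mathrm{Tr}(A_{\pfd})\subseteq R$.} It rests on $R$ being integrally closed in $R[1/d]$, which is false for perfectoid rings even under the present hypotheses. Take the perfectoid ring $P=\{(r_1,r_2,r_3)\in R^3\mid r_1\equiv r_2\equiv r_3 \bmod \sqrt{pR}\}$ of \Cref{not_syntomic}, with $d=p$. It contains $pR^3$, so the idempotent $(1,0,0)\in P[1/p]$ is integral over $P$ but does not lie in $P$.

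The mechanism that works is descent of membership, not integrality, and it needs the right cover. A cover that is only faithful away from $d$ says nothing at $d$. With $R'$ as above, write $\mathrm{ev}_i(x)$ for the components of $x$ under $A_{\pfd}\to\prod R'$. Each $\mathrm{ev}_i(x)$ lies in $R'$, and the image of $\mathrm{Tr}(x)=r/d^k$ in $R'[1/d]$ equals $\sum_i\mathrm{ev}_i(x)$. Since $d$ is a non-zero divisor on $R'$, this gives $r\in d^kR'$. You then conclude $r\in d^kR$ from injectivity of $R/d^kR\to R'/d^kR'$. That injectivity holds because $R/d^kR$ has bounded $p^\infty$-torsion (by induction from $R/dR$, as $d$ is a non-zero divisor) and $R\to R'$ is $p$-completely faithfully flat.

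\emph{What the repaired argument gives.} With these two repairs your trace argument is a valid alternative proof of $dA_{\pfd}\subseteq A$. It needs neither Bhatt's lemma on cones nor the descent of a derived statement. The paper's argument is different: it applies the Vandermonde matrix over $R'$ and uses Bhatt's lemma to show $\Cone(A'\to A'_{\pfd})$ is killed by $\det(V)^2=\pm d$. It then descends this derived statement even though $\det(V)$ does not live in $R$. That yields the stronger fact that $\Cone(A\to A_{\pfd})$ is killed by $d$ with no hypothesis on $d$.
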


We also see the above as some kind of ``hidden finiteness'' of the perfectoidization: namely if $A_{\pfd}$ were a finite $R$-algebra, then the fact that $A\to A_{\pfd}$ is an isomorphism after inverting $d$ would imply $d^NA_{\pfd}\subseteq A$ for some $N\in \N$. 

 

Another result of \Cref{first_sec} gives a criterion for recognizing the perfectoidization from an intermediate algebra. Roughly speaking, it shows that it is enough to adjoin suitable compatible \(p\)-power roots modulo \(p\).

\begin{thm}[\Cref{densityprinciple}, \Cref{densityprincipleVp}]\label{densityprinciple_intro}
Let $R$ be a perfectoid ring
and $d\in R$ is a non-zero divisor.
Say that $R\to A$ is a finite and finitely presented map étale outside of $V(d)$ such that $d$ is also a non-zero divisor in $A$.
In this case, $A$ can be seen as a subring of $A_{\pfd}$.
Let $A \subseteq C \subseteq A_{\pfd}$ be an intermediate subring such that \(C/pC\) is semiperfect.

Then \(C\) is dense in \(A_{\pfd}\) for the subspace topology where $A_{\pfd}$ is equipped with the $p$-adic topology. If $p=d$, then the $p$-completion $C^{\wedge_p}$ is $A_{\pfd}$.

\end{thm}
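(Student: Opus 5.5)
We first show that \(A \subseteq A_{\pfd}\). Since \(R\to A\) is finite, finitely presented and étale outside \(V(d)\), \cite[Theorem 10.9]{bhatt2022Prismsa} tells us that \(A_{\pfd}\) is discrete. It also gives that \(A[1/d]\to A_{\pfd}[1/d]\) is an isomorphism; this is André's flatness/almost purity in the form proved there, with \(d\)-torsion in \(A_{\pfd}\) being almost zero. Because \(d\) is a non-zero divisor in \(A\), the composite \(A\to A[1/d]\) is injective. Hence \(A\to A_{\pfd}\) is injective, since its kernel is \(d\)-power torsion in \(A\), hence zero.

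The heart of the argument is to compare \(C\) with \(A_{\pfd}\) modulo \(p\), i.e. to show \(A_{\pfd}=C+pA_{\pfd}\). Iterating this gives \(A_{\pfd}=C+p^nA_{\pfd}\) for all \(n\), which is exactly density in the \(p\)-adic topology.

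We would establish \(A_{\pfd}=C+pA_{\pfd}\) in three steps.

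\begin{enumerate}
\item Let \(S\) be the image of \(C\to A_{\pfd}/p\). Since \(C/pC\) is semiperfect, \(S\) is semiperfect.
\item Set \(S' := S_{\mathrm{perf}}\), the perfection of \(S\). Since \(A_{\pfd}/p\) is semiperfect, the universal property of \(A_{\pfd}\) should force the map \(S_{\mathrm{perf}}\to (A_{\pfd}/p)_{\mathrm{perf}}\) to be surjective.
\item We then want to argue that \(A_{\pfd}\) is the perfectoidization of \(C\). Concretely, \(C\to A_{\pfd}\) is an isomorphism after inverting \(d\), hence \(C_{\pfd}\cong A_{\pfd}\) by the universal property (both are initial perfectoid \(A\)-algebras). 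Then use the description of the perfectoidization of a ring with semiperfect reduction: for \(C\) with \(C/p\) semiperfect, the map \(C\to C_{\pfd}\) is surjective mod \(p\), up to \(p\)-completion and almost mathematics.
\end{enumerate}

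For step 3 I would try to use \cite[Theorem 7.4]{bhatt2022Prismsa}: for semiperfectoid rings, \(C_{\pfd}\) is a quotient of the \(p\)-completion of \(C\). To apply it, I would replace \(C\) by the \(p\)-completion of \(R\otimes C\), made semiperfectoid by adjoining \(p\)-power roots of generators via \(R\). Then \(C^{\wedge}\to A_{\pfd}\) is surjective, giving density. If instead \(C\) is only dense in \(C^{\wedge}\), the image is still dense.

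When \(p=d\), the kernel of \(C^{\wedge_p}\to A_{\pfd}\) is \(p\)-power torsion, since the map is an isomorphism after inverting \(p\). Since \(C\subseteq A_{\pfd}\) is \(p\)-torsion free, \(C^{\wedge_p}\) is also \(p\)-torsion free. So the kernel vanishes and \(C^{\wedge_p}=A_{\pfd}\).

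The main obstacle is step 3. The problem is that \(C\) need not be \(p\)-complete or semiperfectoid, so we must justify \(C_{\pfd}=A_{\pfd}\) and that the semiperfectoid quotient description applies. Here \(C_{\pfd}\) is taken via derived \(p\)-completion of the semiperfectoid ring \(C^{\wedge}\), which is semiperfectoid because it is a quotient-type algebra over a perfectoid ring \(R\). We also need to control the possible almost-zero discrepancy. I expect the discreteness statement from Theorem 10.9 handles this.
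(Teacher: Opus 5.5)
Your first paragraph (injectivity of $A\to A_{\pfd}$) is fine. The density argument, however, has a genuine gap at step 3, which you yourself leave open.

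The assertion that $C_{\pfd}\cong A_{\pfd}$ ``because both are initial perfectoid $A$-algebras'' is not justified. $C_{\pfd}$ is initial among perfectoid $C$-algebras. To identify it with $A_{\pfd}$ you would need that any two maps $C\to P$ to a perfectoid ring which agree on $A$ coincide. Being an isomorphism after inverting $d$ does not give this: e.g.\ $A\to A\times A/dA$ becomes an isomorphism after inverting $d$, but its perfectoidization is $A_{\pfd}\times (A/dA)_{\pfd}$.

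Step 2 does not help either. The perfection of a semiperfect ring is its reduction, so it could at best give $A_{\pfd}=C+\sqrt{pA_{\pfd}}$, not $C+pA_{\pfd}$. Also, no almost mathematics is needed. Finally, $\widehat{C}$ is already semiperfectoid, being $p$-complete with semiperfect reduction over the perfectoid ring $R$. So there is no reason to modify $C$ by adjoining roots; doing so would change its image in $A_{\pfd}$, which is exactly what you need to control.

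The missing observation is that only surjectivity is needed, and it is formal. The paper argues as follows.
\begin{enumerate}
\item Let $D$ be the image of $\widehat{C}\to A_{\pfd}$; it is a $p$-complete semiperfectoid ring.
\item $D\to D_{\pfd}$ is surjective by \cite[Theorem 7.4]{bhatt2022Prismsa}, while the inclusion $D\subseteq A_{\pfd}$ factors through $D_{\pfd}$, so $D\to D_{\pfd}$ is also injective. Hence $D$ is perfectoid (\cite[Lemma 2.3]{Dine_2024}).
\item Since $A\subseteq D\subseteq A_{\pfd}$, the universal property gives $A_{\pfd}\to D$ whose composite with $D\subseteq A_{\pfd}$ is the identity, so $D=A_{\pfd}$.
\end{enumerate}
In your language, the repair is this. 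The map $\psi\colon\widehat{C}_{\pfd}\to A_{\pfd}$ induced by $C\subseteq A_{\pfd}$ has a section $\phi\colon A_{\pfd}\to\widehat{C}_{\pfd}$ coming from $A\to\widehat{C}_{\pfd}$. Indeed, $\psi\circ\phi$ is an endomorphism of $A_{\pfd}$ compatible with $A\to A_{\pfd}$, hence the identity. So $\psi$ is surjective, and composing with the surjection $\widehat{C}\to\widehat{C}_{\pfd}$ gives $\widehat{C}\twoheadrightarrow A_{\pfd}$. Whether $\psi$ is an isomorphism is irrelevant.

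For $d=p$, you claim $\ker(C^{\wedge_p}\to A_{\pfd})$ is $p$-power torsion ``since the map is an isomorphism after inverting $p$''; this is also unjustified. You only know $C[1/p]=A_{\pfd}[1/p]$, and $p$-completion does not commute with inverting $p$. A priori the $p$-adic topology of $C$ could be strictly finer than the subspace topology from $A_{\pfd}$, and then the kernel need not be torsion.

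The missing input is the uniform bound $p^{n_0}A_{\pfd}\subseteq A\subseteq C$ of \Cref{boundedptors}. It rests on the derived $p$-completeness of $A$ and $A_{\pfd}$ and the Baire-category argument of \Cref{boundedtorsion_iso_awayfromp_completemodules}. With it, if $c_n\in C$ tend to $0$ in $A_{\pfd}$, then $p^{n_0}c_n$ tend to $0$ in $C$. So the kernel is killed by $p^{n_0}$, and your torsion-freeness argument finishes. The paper phrases this as: $A_{\pfd}/C$ is killed by $p^{n_0}$, so its derived $p$-completion has no $\pi_1$.
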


Afterwards, in \Cref{sec_ex}, we produce examples of perfectoidizations of finite algebras over a perfectoid ring. Namely, a prototypical example in our study is the Kummer-type finite extension
\begin{equation}\label{equ_m_intro}
    R[t]/(t^m-d)
\end{equation}
for an $m\in \N$ with $(m,p)=1$. We prove the following --  this was also ``almost'' computed in the case $m=2$ in \cite[Example 2.3.12]{reinecke2020Moduli}.

\begin{thm}[\Cref{ComputePerfectoidizationTm}]
\label{ComputePerfectoidizationTmIntro} 
    Let \(R\) be a perfectoid ring. Let $m\in \N$ be an integer such that $(p,m)=1$. Let \(R\abracket{t^{1/p^\infty}}\) denote the \(p\)-adic completion of the perfect polynomial ring \(\bigcup_{n \geq 0} R[t^{1/p^n}]\) over \(R\).
    Then
    \begin{equation*}
       \left(R\abracket{t^{1/p^\infty}}[t^{1/m}]\right)_{\pfd}=R\abracket{t^{1/mp^\infty}} \defeq \parenlr{\bigcup_{n \geq 0} R[t^{1/mp^n}]}^{\wedge p}.
    \end{equation*}

\end{thm}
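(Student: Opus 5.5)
Write $B = R\abracket{t^{1/p^\infty}}[t^{1/m}] = R\abracket{t^{1/p^\infty}}[s]/(s^m - t)$ and $P = R\abracket{t^{1/mp^\infty}}$. The plan is to show that $P$ is perfectoid and that $B \to P$ satisfies the universal property.

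\textbf{Step 1: $P$ is perfectoid.} Since $(m,p)=1$, the monoid $\Z[1/p]_{\geq 0}\cdot\frac1m$ is $p$-divisible. So $P$ is the $p$-completion of the monoid algebra $R[M]$ with $M = \frac1m\Z[1/p]_{\geq 0}$, which is uniquely $p$-divisible. Hence $P/pP$ is semiperfect, because $R/p$ is semiperfect and the monomials have $p$-th roots. The kernel of Frobenius on $P$ is the base change of that of $R$, and $P$ is $p$-torsion free over $R$ in the appropriate sense (it is topologically free). So $P$ is perfectoid. Alternatively, $P \cong R\widehat{\otimes}_{\Z_p}\Z_p\abracket{M}$, and a $p$-completed tensor product of perfectoids over a perfectoid is perfectoid.

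\textbf{Step 2: a map $B \to P$.} Send $s \mapsto t^{1/m}$. This gives a map $B\to P$, hence $B_{\pfd}\to P$.

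\textbf{Step 3: a map $P \to B_{\pfd}$.} This is the key step. In $B_{\pfd}$ we need compatible elements $t^{1/mp^n}$. Write $1 = am + bp^n$ with $a,b\in\Z$, and set $u_n = s^{a}\,t^{b/m\cdot 0}\cdots$. More precisely, we want $x$ with $x^{p^n} = s$. Take
\[x = s^{a}\, t^{b/p^n\cdot ?}.\]
With $1 = am + bp^n$, define $x = s^{a'}t^{b'/p^n}$ where $a' = \ldots$. Concretely, the element
\[x_n = t^{c/p^n}\, s^{-e}\]
has the right form once $1/(mp^n) = c/p^n - e/m$, i.e. $1 = cm - ep^n$, which is solvable.

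Negative powers of $s$ are the problem, since $s$ is not a unit. So instead use nonnegative representatives: choose $c\geq 0$ and $e \geq 0$ with $cm - ep^n = 1$. Then $x_n s^{e} = t^{c/p^n}$, so $x_n$ lives only in $B[1/s]$.

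Thus the genuine content is showing that $t^{1/mp^n}$, a priori in $B[1/t]_{\pfd}$, lies in the image of $B_{\pfd}$. Here I use that $B \to B[1/t]$ is étale away from $V(t)$ (the discriminant is a unit times a power of $t$, since $m$ is invertible in $B[1/t]$). The element $y = t^{1/mp^n}$ is integral over $B$, as a root of $Y^{p^n} - s$.

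In $B_{\pfd}$ one has $s$ compatible with a $p^n$-th root: because $B_{\pfd}$ is perfectoid, $s$ admits $p$-power roots up to a unit mod $p$ in a $p$-complete sense. More cleanly, $B_{\pfd}\to B_{\pfd}[Y]/(Y^{p^n}-s)$ becomes perfectoid after perfectoidization, and I claim it is already an isomorphism. Perfectoidization kills $p$-power Frobenius-type extensions: $\Z_p[Y]/(Y^{p^n}-s)\to$ its perfectoidization equals the adjunction of compatible roots, and over a perfectoid containing $s$ with $s$ having a compatible $p$-power system modulo $p$ (given by $s^{1/p^k} := t^{c_k/p^k}$ times the analogous terms), it is $B_{\pfd}$ itself.

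I expect this to be the main obstacle: producing a compatible system of $p$-power roots of $s$ in $B_{\pfd}$. The approach I would try first is to use $s^{mp^n}$-relations: $s^m = t$ has roots $t^{1/p^k}$, so $(s^{1/p^k})^m$ is given. Since $m$ is prime to $p$ and $B_{\pfd}$ is perfect modulo $p$ (Frobenius surjective and, on the tilt, bijective), the $m$-th root of $t^{1/p^k}$ compatible with $s$ can be constructed on the tilt. The tilt $B_{\pfd}^\flat$ is perfect, and there the equation $X^m = (t^\flat)^{1/p^k}$ with $X^{p^k}$ specified ($X := (s^\flat)^{1/p^k}$) is solved by the perfectness of $B_{\pfd}^\flat$. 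Then lift via $\theta$.

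\textbf{Step 4: conclusion.} With this, the two maps $B_{\pfd}\rightleftarrows P$ are mutually inverse on generators, and by the universal property of perfectoidization both composites are the identity.
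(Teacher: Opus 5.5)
\textbf{There is a genuine gap in Step 3, which is the whole content of the theorem.} You never actually produce a compatible system of $p$-power roots of $s$ in $B_{\pfd}$, i.e.\ an element $s^\flat\in B_{\pfd}^\flat$ with $(s^\flat)^\sharp=s$ and $(s^\flat)^m=t^\flat$. Both mechanisms you propose for this fail.

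First, the claim that $B_{\pfd}\to (B_{\pfd}[Y]/(Y^{p^n}-s))_{\pfd}$ is an isomorphism is false in mixed characteristic. Perfectoidization does not kill $p$-power Kummer extensions, even of elements that already have compatible $p$-power roots. For instance, \Cref{example_pnthroots_unit} with $u=1$ gives $(R[Y]/(Y^{p^n}-1))_{\pfd}=\Facst{p}(\underline{p^n},R)\neq R$. It already fails in the situation at hand: a posteriori $B_{\pfd}=P$, and if $R$ is $p$-torsion-free and contains $\mu_{p^n}$, then $Y^{p^n}-s$ splits over $P$ and \Cref{example_general_pnth_roots} gives $\Facst{ps}(\underline{p^n},P)\neq P$. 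Arc-locally there are $p^n$ choices $Y\mapsto \zeta y$.

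Second, the tilting argument is circular: setting $X:=(s^\flat)^{1/p^k}$ presupposes the very element $s^\flat$ you are trying to construct. Surjectivity of $B_{\pfd}^\flat\to B_{\pfd}/p$ only yields some $\sigma$ with $\sigma^\sharp\equiv s \bmod p$. Perfectness of a characteristic-$p$ ring gives unique $p$-th roots, not $m$-th roots; for example, $t$ has no $m$-th root in $\mathbf{F}_p[t^{1/p^\infty}]$. So Step 4 has nothing to apply to.

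\textbf{What you get right, and how the paper closes the gap.} Your B\'ezout computation $x_n s^e=t^{c/p^n}$ with $cm-ep^n=1$ is correct. It shows $B[1/t]=P[1/t]$, indeed $t^{m-1}P\subseteq B$, which is exactly the paper's \eqref{PlayWithExponents}. What is missing is a mechanism to descend from $B[1/t]$ to $B_{\pfd}$.

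The paper never constructs an inverse map. Since $B\to P$ becomes an isomorphism after inverting $t$ and $P$ is perfectoid, it applies \cite[Proposition 2.7]{struc_pfd} (i.e.\ \cite[Corollary 8.12]{bhatt2022Prismsa}) to get a pullback square $B_{\pfd}\cong P\times_{(P/tP)_{\pfd}}(B/tB)_{\pfd}$. Both lower corners are just $R$, because perfectoid rings are reduced and $p$-complete, so $s$ and all $t^{1/p^n}$ die. The map between them is the identity, so the top map is an isomorphism.

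Alternatively, your B\'ezout idea can be made rigorous arc-locally, as in \Cref{example_general_pnth_roots}. Take a perfectoid valuation ring $V$ of rank $\le 1$ and an $R$-map $B\to V$ with $t\not\mapsto 0$. The only possible compatible roots are $\tau_k=(t^{1/p^k})^{a_k}s^{b_k}$ with $a_km+b_kp^k=1$, computed in $\mathrm{Frac}(V)$. They lie in $V$ because $\tau_k^m=t^{1/p^k}$ and $V$ is integrally closed. If $t\mapsto 0$, everything is forced to be $0$. This gives bijectivity of $\operatorname{Hom}(P,V)\to\operatorname{Hom}(B,V)$, which is what should replace your Step 3.

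A minor point: the alternative argument in Step 1 is off, since $\Z_p\abracket{M}$ is not perfectoid. It suffices to note that $P\cong R\abracket{t^{1/p^\infty}}$ abstractly.
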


Using the universality of the above example, we get a general result in the case where $d\in R$ in \Cref{equ_m_intro} admits a $p$-compatible sequence of $p$-power roots in \Cref{ComputePerfectoidizationRootm}.

We also treat a complementary situation: the element \(d\) itself need not admit a compatible system of \(p\)-power roots, but a unit twist of \(d\) does. Under the additional assumption \(D(d)=D(p)\), we compute the perfectoidization using \Cref{densityprinciple_intro}.

\begin{thm}[\Cref{Computation_pi_unit}]
    \label{Computation_pi_unit_intro}
    Let \(R\) be a \(\pi\)-torsion-free perfectoid ring with $D(\pi)=D(p)$ in $\Spec(R)$ admitting a compatible system \(\{\pi^{1/p^n}\}_{n \geq 0}\) of \(p\)-power roots of \(\pi\).
    Take a unit \(u\) in \(R\) and an integer \(m\) such that \((m, p) = 1\).
    Set
    \begin{equation*}
        A \defeq R[t]/(t^m-\pi u),
    \end{equation*}
    which is finite \'etale outside \(\pi\) over \(R\).
    Write the multiplicative order \(k_0\) of \(p\) in \((\setZ/m\setZ)^\times\) and let $q=p^{k_0}$.
    Then the perfectoidization of \(A\) can be identified with
    \begin{equation*}
        A_{\pfd} = \parenlr{\bigcup_{n \geq 0} R\bracketlr{\parenlr{\pi^{1/q^n}u}^{1/m}}}^{\wedge_p}
    \end{equation*}
    where \((\pi^{1/q^n}u)^{1/m}\) is a \(m\)-th root of \(\pi^{1/q^n}u\) which we can take as an element of \(A[1/p]\).
\end{thm}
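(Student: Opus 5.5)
We will deduce the statement from the density criterion \Cref{densityprinciple_intro}, applied with $d=\pi$, and using $D(\pi)=D(p)$. The discriminant of $t^m-\pi u$ is, up to sign, $m^m(\pi u)^{m-1}$. Since $m$ is prime to $p$, $m$ is a unit in the $p$-complete ring $R$, so $A$ is finite étale over $R[1/\pi]$. Moreover $A$ is free over $R$ with basis $1,t,\dots,t^{m-1}$, so $\pi$ is a non-zero divisor in $A$. Hence $A\subseteq A_{\pfd}\subseteq A_{\pfd}[1/\pi]=A[1/\pi]=A[1/p]$.

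\emph{Constructing the elements.} The equality $A_{\pfd}[1/p]=A[1/p]$ should come from \cite[Theorem 10.9]{bhatt2022Prismsa}: $A\to A_{\pfd}$ is an isomorphism away from $V(\pi)=V(p)$. We work inside $A[1/p]$ and want to define $s_n:=(\pi^{1/q^n}u)^{1/m}$ with $s_0=t$, normalized so that
\[s_{n+1}^{q}=s_n\cdot v_n\]
for suitable units $v_n$ of $R$. The point of taking $q=p^{k_0}$ with $q\equiv 1 \pmod m$ is this: $(s_{n+1}^q)^m=\pi^{1/q^n}u^q=s_n^m\,u^{q-1}$, and $u^{q-1}=(u^{(q-1)/m})^m$ is an honest $m$-th power. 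So we may set $s_{n+1}^q:=s_n u^{(q-1)/m}$, and the problem reduces to finding a $q$-th root of $s_n u^{(q-1)/m}$ inside $A_{\pfd}$.

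This is the step I expect to be the main obstacle. My plan is to use that $A_{\pfd}$ is perfectoid, so $A_{\pfd}/p$ is semiperfect and elements admit $q$-th roots modulo $p$. A root modulo $p$ is not enough, however: we need an actual element of $A[1/p]$ with $m$-th power exactly $\pi^{1/q^{n+1}}u$. For this I would use that $A[1/p]$ is finite étale of degree $m$ over $R[1/p]$. The polynomial $x^m-\pi^{1/q^{n+1}}u$ over $A_{\pfd}$ should have a root, obtained by lifting an approximate root: take a $q$-th root $y$ of $s_n u^{(q-1)/m}$ modulo $p$, then refine it by Newton/Hensel iteration. Since $m$ is a unit, the derivative of $x^m-\pi^{1/q^{n+1}}u$ at $y$ is $m y^{m-1}$, and $y^{m}$ is approximately $\pi^{1/q^{n+1}}u$, so Hensel's lemma applies up to powers of $\pi$. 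This works because $A_{\pfd}$ is $\pi$-torsion-free and $p$-complete, with $\pi$-adic and $p$-adic topologies agreeing, as $D(\pi)=D(p)$.

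\emph{Concluding.} Let $C:=\bigcup_n R[s_n]\supseteq A$. We check that $C/pC$ is semiperfect. Each $s_n$ is a $q$-th power modulo units: $s_n=s_{n+1}^q v_n^{-1}$, and $v_n$ is a unit in the perfectoid ring $R$, whose reduction modulo $p$ is semiperfect. Elements of $R$ are $p$-th powers modulo $p$. Hence every generator, and thus every element of $C/pC$, is a $q$-th power, and therefore a $p$-th power, using additivity of Frobenius modulo $p$. Then \Cref{densityprinciple_intro} in the case $d=p$, which is legitimate since $V(\pi)=V(p)$ and we may replace $\pi$ by $p$ in the hypotheses, gives $C^{\wedge_p}=A_{\pfd}$. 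This is the claimed description, with the root $(\pi^{1/q^n}u)^{1/m}$ taken to be $s_n$.
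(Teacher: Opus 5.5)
\textbf{There is a genuine gap at the step you flag.} The rest of your architecture is the paper's: the ring $C=\bigcup_n R[s_n]$ normalized by $s_{n+1}^q=s_n u^{(q-1)/m}$, semiperfectness of $C/pC$ from that of $R/pR$, and \Cref{densityprincipleVp} using $V(\pi)=V(p)$. But you never actually produce the $s_n$ or show that they lie in $A_{\pfd}$, and the Newton/Hensel plan does not work as stated.

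\emph{Why Hensel fails here.} For $f(x)=x^m-\pi^{1/q^{n+1}}u$ you have $f'(y)=my^{m-1}$ with $y$ a non-unit. Newton's method would need $f(y)\in f'(y)^2\cdot I$ with $I$ topologically nilpotent, i.e.\ divisibility of $f(y)$ by $y^{2m-2}$ in $A_{\pfd}$. You only control $f(y)$ modulo (a root of) $p$, and $A_{\pfd}$ is not a valuation ring. Moreover, refining $y$ destroys the exact relation $y^q=s_nu^{(q-1)/m}$, which you need on the nose, since a congruence modulo $pA_{\pfd}$ is not a congruence modulo $pC$. Finally, two roots of $x^m-\pi^{1/q^{n+1}}u$ in the non-domain $A[1/p]$ differ by an $m$-th root of unity of $A[1/p]$, which need not lie in $R$. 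So a root obtained by approximation does not give $s_{n+1}^q=s_nv_n$ with $v_n\in R^\times$.

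\emph{The missing idea: no approximation is needed, because the roots are explicit in $A[1/\pi]$.} Put $l_n=(q^n-1)/m$, so $q^n=1+ml_n$, and set
\[
s_n:=t\cdot(\pi^{1/q^n})^{-l_n}\in A[1/\pi].
\]
\begin{itemize}
\item The $m$-th power is right: $s_n^m=\pi u\cdot \pi^{-(q^n-1)/q^n}=\pi^{1/q^n}u$.
\item The normalization holds exactly: a direct computation gives $s_{n+1}^q=s_nu^{l_1}$. Equivalently $s_{n+1}=s_n(\pi^{1/q^{n+1}})^{-l_1}$, which is in fact forced by the B\'ezout relation $1=q-ml_1$ applied to your two conditions $x^q=s_nu^{l_1}$ and $x^m=\pi^{1/q^{n+1}}u$.
\item A $p$-power lands in $A$:
\[
s_n^{q^n}=t^{1+ml_n}\pi^{-l_n}=t\,u^{l_n}\in A .
\]
\end{itemize}
Now $R$ is $p$-torsion-free, since it is $\pi$-torsion-free and $\pi^N\in pR$. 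Hence $A$ and $A_{\pfd}$ are $p$-torsion-free. Being perfectoid, $A_{\pfd}$ is $p$-root closed in $A_{\pfd}[1/p]=A[1/p]$, so $s_n\in A_{\pfd}$.

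This is exactly the paper's argument, phrased there as the colimit of $R[t]/(t^m-\pi^{1/q^n}u)$ along $t\mapsto t(\pi^{1/q^{n+1}})^{(q-1)/m}$. With these $s_n$ in hand, your semiperfectness check and the appeal to the density criterion go through as you wrote them.
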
 

As a third class of examples, which is also in general disjoint from the two others, we study the case $R[t]/m(t)$ when $m(t)$ splits. Here $\Facst{d}(\underline{n},R)$ is a subring of the product $R^n$ consisting of elements which are constant modulo \((d)_{\pfd}\) defined in \Cref{acst-notation}. See also another description of this ring in \Cref{split_ramification_is_point_other_form}.

\begin{thm}[\Cref{split_ramification_is_point}]\label{split_ramification_is_point_intro} Let $R$ be a perfectoid ring and $m(t)$ be a split monic polynomial of degree $n$.
Assume that $(R/dR)\to (R/dR)[t]/(m(t))$ induces an isomorphism at perfectoidization. Then if $A=R[t]/(m(t))$, we have
\[A_{\pfd}=\Facst{d}(\underline{n},R).\] 
\end{thm}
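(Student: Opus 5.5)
Since $m(t)=\prod_{i=1}^n(t-a_i)$ splits with $a_i\in R$, the natural comparison map is
\[A\to R^n,\qquad t\mapsto (a_1,\dots,a_n).\]
It is injective when $d$ is a non-zero divisor, and it becomes an isomorphism after inverting $d=\prod_{i<j}(a_i-a_j)^2$, by the Chinese remainder theorem. The plan is to show that $A_{\pfd}$ is the fibre product
\[R^n\times_{(R/d)_{\pfd}^n}(R/d)_{\pfd}.\]
Here $(R/d)_{\pfd}\to(R/d)_{\pfd}^n$ is the diagonal. By the definition in \Cref{acst-notation}, this fibre product is exactly $\Facst{d}(\underline{n},R)$: tuples in $R^n$ that are constant modulo $(d)_{\pfd}$.

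\textbf{Step 1: the candidate is perfectoid.} The three rings $R^n$, $(R/d)_{\pfd}$ and $(R/d)_{\pfd}^n$ are perfectoid. The map $R^n\to(R/d)_{\pfd}^n$ is surjective, since $R\to (R/dR)_{\pfd}=R/(d)_{\pfd}$ is surjective by \cite{bhatt2022Prismsa}. A fibre product of perfectoid rings along a surjection is perfectoid (the standard fact from \cite{bhatt2022Prismsa}). Hence $P:=\Facst{d}(\underline{n},R)$ is perfectoid. The map $A\to R^n$ lands in $P$, because modulo $(d)_{\pfd}$ all the $a_i$ agree. Indeed $(a_i-a_j)^2$ divides $d$, so $a_i-a_j\in\sqrt{(d)}$, and the ideal $(d)_{\pfd}$ is perfectoid-radical, hence contains $a_i-a_j$.

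\textbf{Step 2: universality.} We get an induced map $\phi\colon A_{\pfd}\to P$, and must show it is an isomorphism. I would use the exact (Milnor/arc-descent) square for perfectoidization. Since $A_{\pfd}$ is discrete by \cite[Theorem 10.9]{bhatt2022Prismsa}, and perfectoidization commutes with the relevant fibre squares (arc-descent, or \cite[Thm.~7.4]{bhatt2022Prismsa}-type gluing), we have
\[A_{\pfd}\simeq A[1/d]_{\pfd}\times_{(A/d)_{\pfd}[1/d]}\cdots.\]
More concretely, I would use the cartesian square of perfectoid $A$-algebras
\[A_{\pfd}\;=\;(R^n)\times_{(R^n/d)_{\pfd}}(A/dA)_{\pfd}.\]
This holds because $A\to R^n$ is an isomorphism away from $V(d)$, so the $v$/arc-sheaf property of $(-)_{\pfd}$ applied to the cover $\Spec R^n\sqcup \Spec A/d\to\Spec A$ gives the square. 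The hypothesis says precisely that $(A/dA)_{\pfd}=((R/d)[t]/m(t))_{\pfd}\cong (R/d)_{\pfd}$. Substituting, we obtain $A_{\pfd}\cong R^n\times_{(R/d)^n_{\pfd}}(R/d)_{\pfd}=P$, and this identification is $\phi$.

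\textbf{Main obstacle.} The delicate step is justifying the cartesian square in Step 2. I would first try to derive it from the arc-descent of $\mathbb{Z}_p$-perfectoidization, i.e.\ the Milnor-type pullback for finitely presented $A\to R^n$ that is an isomorphism off $V(d)$. One must check that the derived fibre product is discrete. This holds because $R^n\to(R/d)_{\pfd}^n$ is surjective, so the $\mathrm{Tor}$/$\lim^1$ term vanishes.
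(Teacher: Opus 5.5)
Your Step 2 is the paper's proof. The evaluation map $A\to R^n$ is finite and an isomorphism away from $V(d)$, so \cite[Proposition 2.7]{struc_pfd} (i.e.\ \cite[Corollary 8.12]{bhatt2022Prismsa}) gives the cartesian square with corners $A_{\pfd}$, $R^n$, $(A/dA)_{\pfd}$ and $(R/(d)_{\pfd})^n$. This square comes from excision (descent for abstract blowup squares), not from the bare sheaf condition for the cover $\Spec R^n\sqcup\Spec A/dA\to\Spec A$. The hypothesis then makes the bottom arrow the diagonal, since $(R/dR)_{\pfd}$ is a quotient of $R$ and so any $R$-algebra map out of it is forced.

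One correction concerns Step 1. The inference ``$(a_i-a_j)^2$ divides $d$, so $a_i-a_j\in\sqrt{dR}$'' runs the divisibility backwards. For example, the roots $0,1,\pi$ with $\pi$ a non-unit give $d=\pm\pi^2(1-\pi)^2$, yet $a_1-a_2=-1\notin\sqrt{dR}$. In fact $a_i\equiv a_j \bmod (d)_{\pfd}$ genuinely needs the hypothesis: the $n$ evaluations $(A/dA)_{\pfd}\to(R/dR)_{\pfd}$ all coincide because the source is $(R/dR)_{\pfd}$. Step 1 is dispensable anyway, since the cartesian square alone already shows $A_{\pfd}\cong\Facst{d}(\underline{n},R)$ and that $A\to R^n$ lands there.
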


We give several examples of applications of \Cref{split_ramification_is_point} in \Cref{split_finite_examples}.

\subsection*{Acknowledgments}
This work was started in the problem session of the conference `\(p\)-adic and Characteristic \(p\) Methods in Algebraic Geometry' at EPFL. We are very grateful for these opportunities and their hospitality. The authors thank Marta Benozzo and Kęstutis Česnavičius for interesting discussions at this conference on the subject of the paper. The second-named author thanks Lucas Gerth, Alapan Mukhopadhyay and Zsolt Patakfalvi for helpful and interesting discussions on the subject of the article.
The first-named author was supported by JSPS KAKENHI Grant number 24KJ1085. The second-named author was supported by the FNS grant, \#200021-
231484.

\section{General study of perfectoidization of finite algebras over a perfectoid ring}\label{first_sec}

\subsection{Preliminaries}

\subsubsection{$p$-adic approximation of $p$-power roots of elements and perfectoidization.}\label{approx_section}

In characteristic \(p\), perfectoidization agrees with perfection, \emph{i.e}. with the colimit along Frobenius. Thus, starting from a ring \(A\), one obtains a new ring \(A_{\perf}\) in which every element becomes a \(p\)-power root of an element of \(A\).

In mixed characteristic, one can observe an analogous phenomenon, although only up to \(p\)-adic approximation. More precisely, elements of a perfectoidization, when they exist, can be viewed as \(p\)-adic approximations to compatible systems of \(p\)-power roots of elements of the original ring. \Cref{approxppowerpfd} gives a precise formulation of this principle.

In \Cref{approx_section}, we analyze this approximation phenomenon in detail. Afterward we will use it to prove \Cref{prootclosed_fetwayfromp}, which gives a mixed-characteristic analogue of the fact that, for semiperfectoid rings, perfectoidization can be described in terms of \(p\)-root closure; see, for example, \cite[Theorem 3.9]{struc_pfd} and \cite[Theorem 1.3]{ishizuka2024Calculationa}.


First, note the following aspect of perfectoid rings.

\begin{lemma}\label{pradform}Let $R$ be a perfectoid ring. Let $\varpi\in R$ be an element admitting a compatible sequence of $p$-power roots and such that $\varpi=pu$ for a unit $u\in R$. Then
\[\sqrt{pR}=\bigcup_{n\in \N}\varpi^{1/p^n}R\]
    
\end{lemma}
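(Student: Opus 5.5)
The inclusion $\bigcup_n \varpi^{1/p^n}R\subseteq\sqrt{pR}$ is immediate: $(\varpi^{1/p^n})^{p^n}=\varpi=pu\in pR$, so each $\varpi^{1/p^n}$ lies in $\sqrt{pR}$, and $\sqrt{pR}$ is an ideal. Note also that the right-hand side is an ideal, because it is an increasing union: $\varpi^{1/p^{n}}=(\varpi^{1/p^{n+1}})^{p}\in\varpi^{1/p^{n+1}}R$. So the content of the lemma is the reverse inclusion. For this I will reduce modulo $p$ and use the Frobenius on $R/pR$.

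Let $x\in R$ with $x^{p^k}\in pR$ for some $k$. The plan is to use the structure of perfectoid rings: $\theta\colon W(R^\flat)\to R$ is surjective, and $R/pR\cong R^\flat/\varpi^\flat R^\flat$. Here $\varpi^\flat=(\varpi,\varpi^{1/p},\dots)\in R^\flat$, and $\ker\theta$ is generated by a distinguished element, which can be taken to be $p-[\varpi^\flat]u'$ for a suitable unit. That $R/pR\cong R^\flat/\varpi^\flat$ is standard, since $\varpi$ and $p$ generate the same ideal. Under this identification the image $\bar x\in R^\flat/\varpi^\flat$ satisfies $\bar x^{p^k}=0$. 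Lift $\bar x$ to $y\in R^\flat$, so that $y^{p^k}=\varpi^\flat z$ for some $z\in R^\flat$. Since $R^\flat$ is perfect, I can extract $p^k$-th roots: $y=(\varpi^\flat)^{1/p^k}z^{1/p^k}$. Mapping back to $R/pR$, where $(\varpi^{\flat})^{1/p^k}$ maps to $\varpi^{1/p^k}$ mod $p$, this gives $x\in \varpi^{1/p^k}R+pR$.

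Finally, absorb the $pR$ term. Since $p=\varpi u^{-1}$ and $\varpi=(\varpi^{1/p^k})^{p^k}\in\varpi^{1/p^k}R$, we get $pR\subseteq\varpi^{1/p^k}R$. Hence $x\in\varpi^{1/p^k}R$, which proves the lemma.

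The only delicate point is the identification of $\varpi^{1/p^k}$ mod $p$ with the image of $(\varpi^\flat)^{1/p^k}$ under the projection $R^\flat\to R/pR$. This holds by the definition of $\varpi^\flat$ as the compatible sequence $(\varpi^{1/p^n})_n$ modulo $p$, since the perfect root $(\varpi^\flat)^{1/p^k}$ is the shifted sequence. One can even avoid tilts entirely and argue directly in $R/pR$, using the fact that Frobenius induces an isomorphism $R/\varpi^{1/p}\to R/\varpi$ for perfectoid $R$. Iterating, $x^{p^k}\in\varpi R$ implies $x\in\varpi^{1/p^k}R+pR$. I would present it that way, citing the standard bijectivity of $\Phi\colon R/\varpi^{1/p}\to R/pR$ from \cite{bhatt2022Prismsa}. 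It needs only $p\in\varpi^{1/p}R$, which holds because $p=\varpi u^{-1}$.
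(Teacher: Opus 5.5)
Your proof is correct, but it takes a different route from the paper.

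The paper first treats $p$-torsion-free $R$ using only that $R$ is $p$-root closed in $R[1/p]$. From $y^{p^n}=(\varpi^{1/p^n})^{p^n}x'$ it gets $(y/\varpi^{1/p^n})^{p^n}\in R$, hence $y\in\varpi^{1/p^n}R$. It then reaches general $R$ through the embedding $R\hookrightarrow R'\times S$, with $R'$ a $p$-torsion-free perfectoid and $S$ perfect of characteristic $p$. Reducedness of $S$ kills the $S$-component of a radical element. One also uses, implicitly, that $R\to R'$ is surjective, so that the cofactor found in $R'$ comes from an element of $R$.

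You instead stay modulo $p$. Via $R/pR\cong R^\flat/\varpi^\flat R^\flat$ and perfectness of $R^\flat$, you show $\ker(\varphi^k\colon R/pR\to R/pR)=\varpi^{1/p^k}R/pR$, and then absorb $pR\subseteq\varpi^{1/p^k}R$. This treats $p$-torsion uniformly, with no product decomposition and no descent from $R'\times S$ back to $R$. The paper's argument, in exchange, needs nothing beyond $p$-root closedness, the same property behind its later $p$-root-closure description of perfectoidizations. Both give the same quantitative statement: $x^{p^k}\in pR$ implies $x\in\varpi^{1/p^k}R$.

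There is one caution about the tilt-free version you say you would present. The isomorphism $\Phi\colon R/\varpi^{1/p}\to R/pR$ cannot literally be iterated, because $\varpi^{1/p^j}$ is not $p$ times a unit for $j\geq 1$. What you need is a short induction on $k$ that also uses surjectivity of Frobenius on $R/pR$:
\begin{itemize}
\item If $x^{p^k}\in pR$, injectivity of $\Phi$ gives $x^{p^{k-1}}=\varpi^{1/p}a$ for some $a\in R$.
\item Choose $b$ with $a\equiv b^{p^{k-1}}\bmod pR$. Then $x^{p^{k-1}}\equiv(\varpi^{1/p^k}b)^{p^{k-1}}\bmod pR$, so $(x-\varpi^{1/p^k}b)^{p^{k-1}}\in pR$.
\item The inductive hypothesis gives $x-\varpi^{1/p^k}b\in\varpi^{1/p^{k-1}}R+pR$, hence $x\in\varpi^{1/p^k}R+pR$.
\end{itemize}
Also, what $\Phi$ really needs, both to be well defined and for the standard bijectivity statement, is $(\varpi^{1/p})^pR=\varpi R=pR$. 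The condition $p\in\varpi^{1/p}R$ alone is not enough. Your hypothesis $\varpi=pu$ supplies this, and the tilt argument as you wrote it avoids both issues.
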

\begin{proof}
    We first show the claim when $R$ is $p$-torsion-free. Let $y\in \sqrt{pR}$. Then there is some integer $n\in \N$ such that
    \[y^{p^n}=px=(\varpi^{1/p^n})^{p^n}x'\]
    for some $x,x'\in R$. In particular in $R[\frac{1}{p}]$ we have 
    \[\left(\frac{y}{\varpi^{1/p^n}}\right)^{p^n}\in R.\]
    As $R\subseteq R[\frac{1}{p}]$ is $p$-root closed because $R$ is perfectoid, we deduce that $y\in \varpi^{1/p^n}R$, concluding.

    Now we tackle the general case. By \cite[Section 2.1.3]{cesnavicius2023purityflatcohomology} we know that $R$ is a subring of a product $R'\times S$ where $R'$ is a $p$-torsion-free perfectoid and $S$ is a perfect ring in characteristic $p$. Seeing $R$ has a subring of such a product and taking $r=(r',s)$ an element in $\sqrt{pR}$, this means that there some power such that $(r'^{N},s^N)=(px,0)$ for some $x\in R'$. As $S$ is reduced, we get $s=0$ and $r'\in \bigcup_{n\in \N}\varpi^{1/p^n}R'$, concluding.
\end{proof}

\begin{proposition}\label{approxppowerpfd} Let $R$ be a perfectoid ring and let $R\to A$ be an $R$-algebra such that there exists an initial map $A\to A_{\pfd}$ to a perfectoid ring. Then given $y\in A_{\pfd}$ and an integer $n\geq 1$, there exists $m\in \N$ and $x\in A$ such that

\[y^{p^m}\in x+p^nA_{\pfd}.\]
    
\end{proposition}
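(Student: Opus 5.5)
Let $B \subseteq A_{\pfd}$ be the set of $y\in A_{\pfd}$ with the property that for every $n\geq 1$ there are $m\in\N$ and $x\in$ the image of $A$ such that $y^{p^m}\in x+p^nA_{\pfd}$. (We read $x+p^nA_{\pfd}$ through the map $A\to A_{\pfd}$.) The plan is to show that $B$ is a $p$-adically closed, perfectoid subring of $A_{\pfd}$ through which $A\to A_{\pfd}$ factors. The universal property of $A_{\pfd}$ then forces $B=A_{\pfd}$. Indeed, we get $A\to B\to A_{\pfd}$, and composing with the unique $A$-map $A_{\pfd}\to B$ yields an $A$-endomorphism of $A_{\pfd}$, which must be the identity. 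Hence $B\to A_{\pfd}$ is surjective, so $B=A_{\pfd}$.

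\textbf{Step 1: $B$ is a subring containing the image of $A$.} It clearly contains the image of $A$ (take $m=0$). For closure under products, use $(yz)^{p^{m}}=y^{p^m}z^{p^m}$, after raising both approximations to a common exponent $m$. This works because if $y^{p^m}\equiv x \bmod p^n$ then $y^{p^{m+1}}\equiv x^p \bmod p^{n}$, by binomial expansion. For sums, use the standard fact that for $a\equiv b \bmod p^{n}$ one has $a^{p^k}\equiv b^{p^k}\bmod p^{n+k}$. Combined with the congruence $(y+z)^{p^{k}}\equiv$ (an expression in $y^{p^j},z^{p^j}$) we need a better tool, so additivity should be done more cleverly. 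Here I would use that $A_{\pfd}$ is perfectoid, so $A_{\pfd}/p$ is semiperfect with Frobenius inducing $A_{\pfd}/p^{1/p}\cong A_{\pfd}/p$ in a suitable sense. Alternatively, use that $(y+z)^{p^{m}}\equiv (y^{p^{m-N}}+z^{p^{m-N}})^{p^N}$ modulo $p^{N'}$ for appropriate indices. In any case, modulo $p$ the map $y\mapsto y^{p^m}$ is additive. I would therefore first prove the $n=1$ version, and then bootstrap to general $n$ using the lifting $a\equiv b \bmod p \Rightarrow a^{p^{k}}\equiv b^{p^{k}} \bmod p^{k+1}$.

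\textbf{Step 2: $B$ is closed and perfectoid.} Closedness under $p$-adic limits is immediate from the definition, since the condition only involves finite levels. For perfectoidness, the key point is that $B$ is closed under taking $p$-th roots approximately. If $y\in A_{\pfd}$ and $y^p\in B$, then $y\in B$ directly: from $(y^p)^{p^m}\equiv x$ we get $y^{p^{m+1}}\equiv x$. Since $A_{\pfd}$ is perfectoid, every element of $A_{\pfd}/p$ is a $p$-th power. We can therefore show that Frobenius on $B/pB$ is surjective, using that $B$ is $p$-root-closed in $A_{\pfd}$ and that $B$ contains $R$, hence a $\varpi$ with $\varpi^p=pu$. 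One must then verify the remaining perfectoid axioms, namely that the kernel of $\theta$ is principal and that $B$ is $p$-complete. Here I would use that a $p$-complete subring $B$ of a perfectoid ring which contains $\varpi$, is $p$-root closed in the above sense, and has surjective Frobenius modulo $p$ is perfectoid. This can be checked by comparing $B^\flat\subseteq A_{\pfd}^\flat$.

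\textbf{Main obstacle.} I expect the main difficulty to be Step 1 (additivity) together with the perfectoid check in Step 2. The fallback plan is to avoid subrings entirely. In that approach one describes $A_{\pfd}$ via the derived description of \cite{bhatt2022Prismsa}, using the inverse limit/perfection of the prism. Under that description, elements of $A_{\pfd}/p$ come from $(A/p)_{\perf}$-type colimits along Frobenius, which gives the $n=1$ case directly. The general $n$ then follows from the $n=1$ case by the congruence $a\equiv b\bmod p\Rightarrow a^{p^{n-1}}\equiv b^{p^{n-1}}\bmod p^{n}$.
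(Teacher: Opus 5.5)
Your framework is sound. Step 1 goes through once you work with the $n=1$ condition: Frobenius is a ring endomorphism of $A_{\pfd}/p$, and $a\equiv b \bmod p$ gives $a^{p^{n-1}}\equiv b^{p^{n-1}} \bmod p^n$, so the set defined by the $n=1$ condition is already your $B$. The closing retraction argument via the universal property is also correct.

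The gap is Step 2, which carries the whole weight of your approach: you never show that $B$ is perfectoid. From $b\in B$ your argument only produces $c\in B$ with $c^p-b\in pA_{\pfd}$. Surjectivity of Frobenius on $B/pB$ needs $c^p-b\in pB$, and a priori you only know $p\varpi A_{\pfd}\subseteq pB$, not $pA_{\pfd}\subseteq pB$. The perfectoidness criterion you then invoke is supported only by ``compare $B^\flat\subseteq A_{\pfd}^\flat$'', which says nothing about principality of $\ker\theta_B$. So the proposal stops exactly at the point you flag as the main obstacle.

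Both points can be repaired. Since $\varpi A_{\pfd}\subseteq B$, write $b-c^p=ps$ and choose $e$ with $ue^p\equiv s \bmod p$ (where $\varpi^p=pu$); then $(c+\varpi e)^p\equiv b \bmod pB$. Now $B$ is a $p$-complete $R$-algebra with $B/pB$ semiperfect, hence semiperfectoid. A semiperfectoid subring of a perfectoid ring is perfectoid by \cite[Theorem 7.4]{bhatt2022Prismsa}; this is how the paper argues in \Cref{densityprinciple}, via \cite[Lemma 2.3]{Dine_2024}. That is heavy machinery for this statement, and none of it is in your proposal.

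The idea you are missing is that you never need $B$ to be perfectoid. It suffices to work modulo $\sqrt{pA_{\pfd}}$, where the universal property is elementary. This is the paper's key step. Perfect rings of characteristic $p$ are perfectoid, and perfectoid rings are reduced. Hence $A\to A_{\pfd}/\sqrt{pA_{\pfd}}$ is initial among maps from $A$ to perfect rings of characteristic $p$, i.e.\ $(A/pA)_{\perf}\to A_{\pfd}/\sqrt{pA_{\pfd}}$ is an isomorphism.

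In your language: $B$ trivially contains $\sqrt{pA_{\pfd}}$ (take $x=0$). Then $B/\sqrt{pA_{\pfd}}$ is a perfect subring of the perfect ring $A_{\pfd}/\sqrt{pA_{\pfd}}$ containing the image of $A$. By your own retraction argument, run among perfect rings, it is all of it, so $B=A_{\pfd}$. The paper makes the passage from $\sqrt{p}$ to $p$ via \Cref{pradform} and the $\varpi^{1/p^i}$-congruence claim, then bootstraps to $p^n$ exactly as you propose.

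Your fallback plan gets this slightly wrong. It is $A_{\pfd}/\sqrt{pA_{\pfd}}$, not $A_{\pfd}/p$, that identifies with $(A/pA)_{\perf}$, so the step from $\sqrt{p}$ to $p$ still has to be made.
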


\begin{proof}Because 
\[A\to A_{\pfd}/\sqrt{pA_{\pfd}}\]
is initial among maps from $A$ to perfect rings in characteristic $p$, we deduce that the canonical map
\begin{equation}\label{comesfromA}
    (A/pA)_{\perf}\to A_{\pfd}/\sqrt{pA_{\pfd}}
\end{equation}

is an isomorphism. Let $y\in A_{\pfd}$. By \Cref{comesfromA} and \Cref{pradform} there is some $k,l\in \N$ and $x_0\in A$ such that

\[y^{p^k}\in x_0+\varpi^{1/p^l}A_{\pfd}.\]
So in the above equation, $x_0$ is to be understood as the image of $x_0\in A$ by the map $A\to A_{\pfd}$.

We will use the following algebraic fact.
\begin{claim}Let $R\to R'$ be any $R$-algebra and $a,b\in R'$. Let $i\geq 1$. Then if
\[a\equiv b \mod \varpi^{1/p^i}R'\]
Then we have
\[a^p\equiv b^p \mod \varpi^{1/p^{i-1}}R'\]
\end{claim}
\begin{claimproof}
Let us prove the claim -- denote $\sigma=\varpi^{1/p^{i}}$, so that $\sigma^{p^i}R'=pR'$. For an ideal $I\subseteq R'$, it holds by the binomial formula that 
\begin{equation*}
    a\equiv b \mod I \implies a^p \equiv b^p \mod pI+I^p.
\end{equation*}
Therefore for $I=\sigma R'$ we get that
\begin{equation*}
     a^p \equiv b^p \mod (p\sigma R'+\sigma^pR')=(\sigma^{p^i+1}R'+\sigma^pR')=\sigma^pR'
\end{equation*}
because $p^i+1\geq p$ for any prime $p$ and $i\geq 1$. This proves our claim. 
\end{claimproof}

Coming back to the proof of the proposition, it now follows by induction using the above claim that
\[y^{p^{k+l}}\in x_0^{p^l}+pA_{\pfd}.\]

Now, we can use another identity, which is a thematic variation of the claim above.
Namely, for any ring $R'$ and $a,b\in R'$, we have,
\[ a\equiv b \mod pR' \implies a^{p^{n}}\equiv b^{p^n} \mod p^{n+1}R'. \]
This is proved similarly using the binomial formula. Therefore it now follows that
\[y^{p^{k+l+n-1}}\in x_0^{p^{l+n-1}}+p^nA_{\pfd},\]
which concludes.
\end{proof}
\begin{rem}The fact used in the proof that for any ring $R'$ and $a,b\in R'$, we have, 
\[ a\equiv b \mod pR' \implies a^{p^{n}}\equiv b^{p^n} \mod p^{n+1}R' \] can be seen as the crucial algebraic reason for which
the untilt map $\limfro R'/p R'\to R'$ sending $(\overline{r}_n)\mapsto \lim_{n\in \N}r_n^{p^n}$ is well defined when $R'$ is a $p$-complete ring. It is arguable that this is one of the core engine of the theory of perfectoid rings.
\end{rem}

\subsubsection{Some results about torsion}

\Cref{nzd_syntomic_sections} has a reassuring application, see \Cref{nzd_reassuring}, in the context of perfectoidization of finite algebras over a perfectoid ring.
\begin{proposition}\label{nzd_syntomic_sections} Let $R$ be a perfectoid ring. Let $R\to A$ be a finite and finitely presented $R$-algebra which has bounded $p^{\infty}$-torsion. Suppose that there is a quasisyntomic and $p$-completely faithfully flat cover $R\to R_1$ such that the derived $p$-complete base change $A_1 \defeq A \widehat{\otimes}^L_R R_1$ of $A$ to $R_1$ admits a finite number of sections $(s_i\colon A_1\to R_1)_{i=1,\dots, k}$ as ring homomorphisms such that the induced map $A_1\to \prod_{i=1}^k R_1$ is surjective on $\Spec$. 

Then for any non-zero divisor $d\in R$ such that $R/dR$ is of bounded $p^{\infty}$-torsion, $d$ is also a non-zero divisor in $A_{\pfd}$.
    
\end{proposition}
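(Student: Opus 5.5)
The plan is to reduce the statement to the split situation over $R_1$, where the perfectoidization of a product of copies of $R_1$ is explicit, and then to descend non-zero-divisor-ness back to $R$. First I will justify the base change. Perfectoidization commutes with $p$-completely flat base change between perfectoid rings, because $A_{\pfd}$ is computed via the prismatic cohomology of $A$ relative to $\Prism_R$, and that construction base changes. By \cite[Theorem 10.9]{bhatt2022Prismsa}, $A_{\pfd}$ is discrete, since $A$ is finite and finitely presented. I also want $R_1$ perfectoid, or at least reduce to that. If $R_1$ is only quasisyntomic, I replace it by its perfectoidization $(R_1)_{\pfd}$, or by a further quasisyntomic cover by a perfectoid ring. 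This keeps the existence of the sections $s_i$ and the surjectivity on $\Spec$, and preserves $p$-complete faithful flatness. So I may assume $R_1$ is perfectoid, and then
\[(A_{\pfd})\widehat{\otimes}^L_R R_1 \simeq (A_1)_{\pfd}.\]

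Next I use faithful flatness to descend the non-zero-divisor statement. The element $d$ is a non-zero divisor on the discrete $p$-complete module $A_{\pfd}$ iff the derived quotient $A_{\pfd}\otimes^L_R R/d$ is discrete. Here the bounded $p^\infty$-torsion of $R/dR$, and of $A$, will be used to pass between derived $p$-completions and honest quotients, so that $d$-torsion is detected after derived $p$-completion. Since $R\to R_1$ is $p$-completely faithfully flat, discreteness of this complex can be tested after $\widehat{\otimes}^L_R R_1$. It therefore suffices to show that $d$ is a non-zero divisor on $(A_1)_{\pfd}$, and that $d$ stays a non-zero divisor on $R_1$ with $R_1/d$ of bounded $p^\infty$-torsion, which flatness gives.

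Now the main step, and the hardest one, is to show that $(A_1)_{\pfd}$ embeds into $\prod_i R_1$. The sections give a map $A_1\to\prod_{i=1}^k R_1$, and hence a map $(A_1)_{\pfd}\to \prod_{i=1}^k R_1$, since the target is perfectoid. I want this map to be injective. The map $A_1\to\prod R_1$ is finite and surjective on $\Spec$, so I would like to invoke the $v$-descent / arc-descent property of perfectoidization. For perfectoid rings, a finite map surjective on $\Spec$ is an arc-cover in the $p$-complete sense, hence a descendable or $v$-cover after perfectoidization. By \cite{bhatt2022Prismsa}, perfectoidization satisfies $v$-descent, so $(A_1)_{\pfd}$ is the equalizer of $\prod R_1 \rightrightarrows (\prod R_1\widehat{\otimes}_{A_1}\prod R_1)_{\pfd}$ and in particular injects into $\prod R_1$. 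This is where I expect the real obstacle: making precise that $A_1\to \prod R_1$, after perfectoidizing the source, is a $p$-complete $v$-cover, for instance by checking it on rank-one valuation rings via the lifting property given by surjectivity on $\Spec$ and finiteness. Once this holds, $d$ is a non-zero divisor on $\prod R_1$, hence on the subring $(A_1)_{\pfd}$, and the descent step concludes.
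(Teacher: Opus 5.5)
Your argument is correct in outline, but it is organized differently from the paper's proof, and the extra detour can be avoided.

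\textbf{How the paper proceeds.} The paper never base changes the perfectoidization and never descends the non-zero-divisor property. It chooses (via \cite[Proposition 7.11]{bhatt2022Prismsa}) a perfectoid $R_1$-algebra $R'$ with $R\to R'$ $p$-completely faithfully flat. It then notes that the composite $A\to A'\to\prod_{i=1}^k R'$ is itself a $p$-complete arc cover of $A$: the first map is $p$-completely faithfully flat, and the second is the base change of the finite, $\Spec$-surjective map $A_1\to\prod R_1$. So \cite[Lemma 2.3]{struc_pfd} directly gives an injection $A_{\pfd}\hookrightarrow\prod_{i=1}^k R'$. One concludes because $d$ is a non-zero divisor on $R'$ \cite[Lemma 5.2]{struc_pfd}.

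\textbf{How your route compares.} You instead prove $(A')_{\pfd}\hookrightarrow\prod R'$, use $(A')_{\pfd}\simeq A_{\pfd}\widehat{\otimes}^L_R R'$, and then descend. This works. The cleanest justification of the descent is that $A_{\pfd}$, being perfectoid, has bounded $p^\infty$-torsion, so $A_{\pfd}\to A_{\pfd}\widehat{\otimes}^L_R R'$ is an injection into a discrete ring. Alternatively, apply derived Nakayama to $\Cone(d\colon A_{\pfd}\to A_{\pfd})$. Once written out, though, your descent just reconstructs the paper's embedding of $A_{\pfd}$ into $\prod R'$. Composing the covers first makes both the base-change formula and the descent unnecessary.

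\textbf{Points to tighten.}

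First, replacing $R_1$ by $(R_1)_{\pfd}$ is not legitimate. For a quasisyntomic $R_1$ that is not semiperfectoid, $(R_1)_{\pfd}$ need not be discrete, i.e.\ need not be a perfectoid ring; a $p$-completed torus over $R$ already has higher cohomology. There is also no reason for it to be $p$-completely flat over $R$. What you need is a perfectoid $R'$ receiving a map from $R_1$ with $R\to R'$ $p$-completely faithfully flat; flatness over $R_1$ is never used.

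Second, the fact that $d$ remains a non-zero divisor on $R'$ does not follow from flatness alone. It amounts to the discreteness of $(R/dR)\widehat{\otimes}^L_R R'\simeq\Cone(d\colon R'\to R')$, which is exactly where the bounded $p^\infty$-torsion of $R/dR$ enters.

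Third, what survives the completed base change is the arc-cover property, not literal surjectivity on $\Spec$.

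Finally, the step you flagged as the main obstacle is routine. The map $A_1\to\prod R_1$ is finite because the finite diagonal $R_1\to\prod R_1$ factors through it. A finite map surjective on $\Spec$ is a $v$-cover, hence a $p$-complete arc cover. No perfectoidization of the source is involved.
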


\begin{proof}
    Note that under the above hypothesis, the map
    \[A_1\to \prod_{i=1}^k R_1\]
    is finite because it factorizes the diagonal map $R_1\to A_1\to \prod_{i=1}^k R_1$. As it is finite and surjective on $\Spec$, this is in particular an arc cover.

    Now we use \cite[Proposition 7.11]{bhatt2022Prismsa}. Namely, we find a \(p\)-completely faithfully flat perfectoid \(R_1\)-algebra $R_1\to R'$ with $R\to R'$ which is $p$-completely faithfully flat. Set $A' \defeq A \widehat{\otimes}^L_R R' \cong A_1 \widehat{\otimes}^L_{R_1} R'$ the derived $p$-complete base change of $A$ to $R'$, which is discrete by \cite[Lemma 5.1]{struc_pfd}. 
    Base changing the above arc cover, we have a \(p\)-complete arc cover
    \[A\to A'\to \prod_{i=1}^k R'\]
    since \(A \to A'\) is \(p\)-completely faithfully flat.
    Using \cite[Lemma 2.3]{struc_pfd} we get an injective map
    \[A_{\pfd}\to \prod_{i=1}^k R'.\]
    Now under our assumptions, using \cite[Lemma 5.2]{struc_pfd}, \(d\) becomes a non-zero divisor on \(R'\) and we can conclude.
\end{proof}

\begin{cor}\label{syntomic_nzd_application}Let $R$ be perfectoid and $R\to A$ be a finite and finitely presented map of one of these forms.
\begin{enumerate}
    \item The algebra is finite free monogenic, namely of the form $R\to A=R[t]/(m(t))$ where $m(t)\in R[t]$ is a monic polynomial.
    \item The map $R\to A$ is a finite syntomic cover and there is a finite group $G$ acting on $A$, such that the action map $A\otimes_R A\to \prod_{G}A$ surjective on $\Spec$.
\end{enumerate}
Then \Cref{nzd_syntomic_sections} applies.
\end{cor}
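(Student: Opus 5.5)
We have to check, in each case, the hypotheses of \Cref{nzd_syntomic_sections}: bounded $p^\infty$-torsion of $A$, and a quasisyntomic $p$-completely faithfully flat cover $R\to R_1$ such that $A_1=A\widehat{\otimes}^L_R R_1$ has finitely many sections $A_1\to R_1$ that are jointly surjective on $\Spec$. In both cases $A$ is finite flat (finite free in case (1), syntomic hence flat in case (2)) and finitely presented over $R$. Bounded torsion is therefore automatic. Indeed, $A$ is a direct summand of a finite free $R$-module in case (1), or more generally finite projective. Since $R$ is perfectoid, it has bounded $p^\infty$-torsion (its $p^\infty$-torsion is killed by $p$), and so does $A$. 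Flatness also makes the derived base change discrete: $A_1=A\otimes_R R_1$ is already $p$-complete as a finite projective module over a $p$-complete ring. So it only remains to produce the cover $R_1$ together with the sections.

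\emph{Case (2).} Take $R_1=A$. The map $R\to A$ is a finite syntomic cover, so it is quasisyntomic and faithfully flat, hence $p$-completely faithfully flat. Then $A_1=A\otimes_R A$. For each $g\in G$, define $s_g\colon A\otimes_R A\to A$ by $a\otimes b\mapsto a\,g(b)$; this is an $A$-algebra map for the left $A$-structure. The product of these maps is exactly the action map $A\otimes_RA\to\prod_G A$, which is surjective on $\Spec$ by hypothesis. This case is immediate.

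\emph{Case (1).} Here we use a splitting algebra. Let $n=\deg m$. Take $R_1$ to be the universal splitting algebra of $m$, namely $R_1=R[x_1,\dots,x_n]/I$, where $I$ is generated by the equations stating that the elementary symmetric functions of the $x_i$ equal the corresponding coefficients of $m$ (up to sign). Equivalently, $R_1$ is obtained by iterating the construction $B\mapsto B[t]/(m(t))$ and dividing by the linear factor at each stage. Each stage is finite free (of ranks $n, n-1,\dots$), so $R\to R_1$ is finite free of rank $n!$ and faithfully flat. It is also a relative complete intersection, cut out by $n$ equations in $n$ variables, so it is syntomic and in particular quasisyntomic. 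Over $R_1$ we have $m(t)=\prod_i (t-x_i)$. Hence $A_1=R_1[t]/(m(t))$ has the $n$ sections $s_i\colon t\mapsto x_i$.

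For surjectivity on $\Spec$, take a prime $\mathfrak q$ of $A_1$ with residue field $k$. The image $\bar t$ of $t$ satisfies $\prod_i(\bar t-\bar x_i)=0$ in $k$, so $\bar t=\bar x_i$ for some $i$. The kernel of $A_1\to k$ then contains $t-x_i$ and so factors through $s_i$. Thus $\mathfrak q$ lies in the image of $\Spec R_1\to\Spec A_1$ along $s_i$.

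The only step that needs care is the construction of the splitting algebra in case (1). One must verify that it is finite free and syntomic, which I would check by the inductive description above. In case (2), the only thing to confirm is that a finite syntomic cover counts as quasisyntomic and $p$-completely faithfully flat in the sense required by \Cref{nzd_syntomic_sections}, and this is standard.
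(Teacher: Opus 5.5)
Your proposal is correct and follows the paper's approach. In case (1) you split $m(t)$ over a finite free syntomic cover and use evaluation at the roots as sections; you construct the universal splitting algebra explicitly, where the paper cites the Stacks Project. In case (2) you take $R_1=A$ with sections $a\otimes b\mapsto a\,g(b)$, which is what the paper intends. You also verify the bounded-torsion and discreteness hypotheses, which the paper leaves implicit.

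The only real difference is how surjectivity on $\Spec$ is checked. You argue pointwise via residue fields, where $\prod_i(\bar t-\bar x_i)=0$ forces $\bar t=\bar x_i$, so the point factors through $s_i$. The paper instead shows the kernel of the evaluation map is nilpotent, using Euclidean division modulo each prime of $R$, and then invokes finiteness. Your version is slightly more direct and equally valid.
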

\begin{proof}
    For the first item, using that after a syntonic finite free base change, see \cite[\href{https://stacks.math.columbia.edu/tag/03HS}{Tag 03HS}]{stacks-project} and \cite[\href{https://stacks.math.columbia.edu/tag/00SR}{Tag 00SR}]{stacks-project}, the polynomial $m(t)$ splits, it suffices to show that the evaluation map at all roots is surjective on $\Spec$. Suppose that $m(t)$ splits already in $R$ for notational simplicity.
    We claim that the evaluation map is surjective with nilpotent kernel. Indeed, take any prime $\pk$ of $R$ and consider the base change
    \[(R/\pk)[t]/(m(t))\to R/\pk\times \cdots \times R/\pk.\]
Now, we can use the usual Euclidean division argument to see that an element in the kernel of the evaluation map is divisible by the product of $(t-\alpha_i)$'s where the $\alpha_i$'s ranges over \emph{distinct} values of the roots of $m(t)$ in the domain $R/\pk$.
This implies that an element of the kernel of the evaluation map is nilpotent in $(R/\pk)[t]/(m(t))$. 
So this element belongs to every prime $\qk$ of $A$ that contains $\pk A$. But as $R\to A$ is surjective on $\Spec$, we see that an element of the kernel of the above map as to be in the intersection of all primes of $A$.

As for the second item we can take $R=R_1$ directly, following notations of \Cref{nzd_syntomic_sections}.
\end{proof}

\begin{rem}\label{nzd_reassuring} Let $R$ be a perfectoid ring and $m(t)$ be a monic polynomial in $R[t]$. Suppose that the discriminant (see \Cref{disc_def}) of $m(t)$ is an element $d$ which a non-zero divisor such that $R/dR$ has bounded $p^{\infty}$-torsion. Let $A=R[t]/(m(t))$. Then using \Cref{syntomic_nzd_application}, we get that $A_{\pfd}$ is $d$-torsion-free. As a consequence, using that $A\to A_{\pfd}$ is a $d$-isogeny (\cite[Theorem 10.9]{bhatt2022Prismsa}) the map
\[A_{\pfd}\to A_{\pfd}\left[\frac{1}{d}\right]=A\left[\frac{1}{d}\right]\]
is injective. 

Note that using for a general $A$ which is finite and finitely presented over $R$ and étale away from $d$, then the kernel of $A_{\pfd}\to A\left[\frac{1}{d}\right]$ is anyway really tame as it is $(d)_{\pfd}$-almost zero using \cite[Theorem 1.1]{struc_pfd}.
\end{rem}

\subsection{A finiteness property of perfectoidization of finite algebras over perfectoid rings}
Say $R\to A$ is a finite and finitely presented map. Then, even if $R\to A_{\pfd}$ is far from being finite, there is some \say{hidden finiteness} of the perfectoidization. Namely, it is a consequence of \cite[Proposition 2.4.7]{cai2023Perfectoid} that we recall below.

\begin{proposition}[{cf.~\cite[Proposition 2.4.7]{cai2023Perfectoid}}]
\label{hidden_fin} Let $R$ be a perfectoid ring and $R\to A$ be a finite and finitely presented map to a ring which has bounded $p^{\infty}$-torsion. Suppose that this map is étale away from a finitely generated ideal $J$. Then for each $g\in J$ there is a $N\in \N$ such that $\Cone(A\to A_{\pfd})$ is killed by $g^N$. In particular as $J$ is finitely generated, there is $n_0\in \N$ with
\[J^{n_0}A_{\pfd}\subseteq \ima(A\to A_{\pfd})\]
    
\end{proposition}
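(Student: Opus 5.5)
The strategy is to reduce to the structural input from \cite[Theorem 10.9]{bhatt2022Prismsa}, or more precisely to its refinement in \cite[Proposition 2.4.7]{cai2023Perfectoid}, and then pass from individual elements $g\in J$ to the whole ideal. First, I recall why $A_{\pfd}$ exists and is discrete. Since $R\to A$ is finite and finitely presented and $A$ has bounded $p^{\infty}$-torsion, \cite[Theorem 10.9]{bhatt2022Prismsa} shows that $A_{\pfd}$ is a discrete perfectoid ring. Because $R\to A$ is étale away from $V(J)$, the same theorem shows that $A\to A_{\pfd}$ becomes an isomorphism after localizing at each $g\in J$, at least up to almost-mathematics with respect to $(g)_{\pfd}$.

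The key step is to sharpen this almost isomorphism to an honest $g^N$-isogeny. The cone $\Cone(A\to A_{\pfd})$ lives in the derived category of $R$-modules. By the above, its cohomology is killed by $g^{1/p^\infty}$ in the perfectoid sense, i.e.\ by $(g)_{\pfd}$, which is contained in $\sqrt{gR}$. To conclude, I would follow Cai's argument. One compares $A\widehat\otimes^L_R R'$, for a $p$-completely faithfully flat perfectoid $R'$ over which $g$ admits compatible $p$-power roots as in \cite[Proposition 7.11]{bhatt2022Prismsa}. There the cone is killed by $g^{1/p^n}$ for all $n$, hence by $g$, up to a bounded defect coming from the bounded $p^\infty$-torsion of $A$. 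This bounded torsion is exactly what allows the finite-level statement to descend to $R$: it ensures that derived $p$-completion and the cone have uniformly bounded torsion. I regard this step as the main obstacle. It amounts to controlling the $p$-torsion contribution so that a single exponent $N$ works for $g$. Since it is precisely \cite[Proposition 2.4.7]{cai2023Perfectoid}, I would cite it rather than redo it.

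For the final assertion, I take generators $g_1,\dots,g_r$ of $J$ with exponents $N_1,\dots,N_r$ such that $g_i^{N_i}$ kills $\Cone(A\to A_{\pfd})$. In particular, $g_i^{N_i}$ kills $\coker(A\to A_{\pfd})$, which is $H^0$ of the cone up to a shift. Set $N=\max N_i$ and $n_0=r(N-1)+1$. By pigeonhole, every monomial of degree $n_0$ in the $g_i$ is divisible by some $g_i^{N}$. Hence $J^{n_0}$ kills the cokernel, which gives $J^{n_0}A_{\pfd}\subseteq \ima(A\to A_{\pfd})$.
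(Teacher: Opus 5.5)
Your proposal matches the paper's approach. The paper gives no independent proof: it recalls the statement from \cite[Proposition 2.4.7]{cai2023Perfectoid} and obtains $J^{n_0}A_{\pfd}\subseteq \ima(A\to A_{\pfd})$ from finite generation of $J$, which is what your pigeonhole argument with $n_0=r(N-1)+1$ makes precise.

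One caution: drop your heuristic sketch of Cai's argument. Its claims that the cone is killed by $(g)_{\pfd}$, or by every $g^{1/p^n}$ after base change, are false in general. Take \Cref{not_syntomic} with $g=p$ and $R$ containing a compatible system of $p$-power roots of $p$. Then $(p^{1/p^2},0,0)\in A_{\pfd}$, but $p^{1/p}\cdot(p^{1/p^2},0,0)\notin A$. So the cokernel is killed by $p$ but not by $p^{1/p}\in (p)_{\pfd}=\sqrt{pR}$. It is therefore right that your proof rests on the citation rather than on that sketch.
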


Note that when $J=(p)$, there is a simple proof of the above which does not require André's Lemma.

\begin{proposition}\label{boundedptors} 

Let $R$ be a $p$-torsion free perfectoid ring and let $R\to A$ be a finitely presented finite map to a $p$-torsion free ring \(A\) such that $R[\frac{1}{p}]\to A[\frac{1}{p}]$ is finite étale.
Then there is $n_0\in \N$ with
\[p^{n_0}A_{\pfd}\subseteq A.\]

\end{proposition}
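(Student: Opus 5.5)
We have to prove \Cref{boundedptors}: for $R$ a $p$-torsion-free perfectoid ring and $A$ finite, finitely presented, $p$-torsion-free with $A[1/p]$ finite étale over $R[1/p]$, there is $n_0$ with $p^{n_0}A_{\pfd}\subseteq A$. The approach uses a trace-form argument plus $p$-root closedness of perfectoid rings. This avoids André's Lemma.

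First I would place everything inside $A[1/p]$. In this situation $A_{\pfd}$ is the $p$-root closure / total integral closure of $A$ in $A[1/p]$, up to $p$-completion. Concretely, by \cite[Theorem 10.9]{bhatt2022Prismsa} $A_{\pfd}$ is discrete and $A[1/p]\to A_{\pfd}[1/p]$ is an isomorphism after $p$-completion. Since $A$ is finite over the $p$-complete ring $R$, $A$ is $p$-complete. Moreover $A_{\pfd}$ is $p$-torsion-free, because a perfectoid ring receiving a map from $A$ and isogenous to it has $p$-torsion a $\sqrt{p}$-almost zero ideal, and one can pass to the quotient. So $A_{\pfd}\subseteq A[1/p]$. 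Each $y\in A_{\pfd}$ is then integral over $A$ up to a small power of $p$. Indeed, by \Cref{approxppowerpfd} $y^{p^m}\in x+pA_{\pfd}$, and iterating gives that $y$ is ``almost integral''. More cleanly, $A_{\pfd}\subseteq A[1/p]$ lies in the $p$-root closure, hence in the integral closure $A^+$ of $A$ in $A[1/p]$ up to multiplication by $p^{1/p^\infty}$. It therefore suffices to bound the integral closure.

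The key step is a trace bound. Since $A[1/p]$ is finite étale over $R[1/p]$, the trace form is perfect on $A[1/p]$. Because $A$ is finite over $R$ (locally free after inverting $p$), the discriminant of the trace pairing restricted to $A$ is an element $\delta$ of $R$ with $\delta\mid p^{c}$ for some $c$, after passing to a finite Zariski cover of $\Spec R[1/p]$ on which $A[1/p]$ is free. The finitely many generators of $A$ and the finitely presented structure make $c$ uniform. For $y$ integral over $A$, hence over $R$, $\operatorname{Tr}(ya)$ is integral over $R$ and lies in $R[1/p]$. Since $R$ is perfectoid and $p$-torsion-free it is $p$-root closed, and I would use this to show that the total integral closure of $R$ in $R[1/p]$ is contained in $p^{-1/p}R$, or even equals $R$ up to $p^{1/p^\infty}$. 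Thus $\operatorname{Tr}(yA)\subseteq p^{-1}R$, and the dual-lattice argument gives $p^{c+1}y\in A$.

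Finally I would combine the two steps: $p^{1/p}A_{\pfd}\subseteq A^+$ and $p^{c+1}A^+\subseteq A$, so $n_0=c+2$ works. I expect the main obstacle to be the trace step when $A[1/p]$ is only locally free over $R[1/p]$ rather than free, so that "discriminant" must be handled via the natural map $A\to \operatorname{Hom}_R(A,R)$ and its cokernel after inverting $p$. There I would use finite presentation to get that the map $A\to\operatorname{Hom}_R(A,R)$ becomes an isomorphism after inverting $p$ with finitely generated kernel/cokernel, hence killed by a fixed $p^{c}$. A secondary subtlety is justifying that $A_{\pfd}$ is $p$-torsion-free and lies in the integral closure up to $p^{1/p^\infty}$. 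For that I would use that $R[1/p]\to A_{\pfd}[1/p]$ is finite étale, so by almost purity $A_{\pfd}$ is almost the integral closure.
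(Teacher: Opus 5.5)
\textbf{There is a genuine gap in your second step, and it carries the whole proof.} You need that $A_{\pfd}$ is contained in the $p$-root closure (or, up to $p^{1/p^{\infty}}$, the integral closure $A^{+}$) of $A$ in $A[\frac{1}{p}]$. That claim is exactly \Cref{prootclosed_fetwayfromp}, and its proof uses the bound $p^{n_0}A_{\pfd}\subseteq A$ that you are trying to prove.

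\Cref{approxppowerpfd} only gives $y^{p^m}\in x+p^{n}A_{\pfd}$. The error term lies in $A_{\pfd}$ itself, and the position of $A_{\pfd}$ inside $A[\frac{1}{p}]$ is precisely what is unknown. ``Iterating'' just produces new error terms $p^{N}z'$ with $z'\in A_{\pfd}$ again. It never yields an integral equation over $A$ unless you already know $A_{\pfd}\subseteq p^{-N}A$ for a fixed $N$.

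The fallback ``by almost purity $A_{\pfd}$ is almost the integral closure'' does not close this. What almost purity and the $p$-root closedness of $A_{\pfd}$ readily give is the \emph{other} inclusion: $A^{+}$ lies in $A_{\pfd}$ up to $p^{1/p^{\infty}}$-almost zero error, because integral elements are power-bounded in $A_{\pfd}[\frac{1}{p}]=A[\frac{1}{p}]$. You need the reverse, namely that every element of $A_{\pfd}$ is (almost) integral over $A$. Given your trace step, that is equivalent to $A_{\pfd}$ being bounded in $A[\frac{1}{p}]$, i.e.\ to the statement itself. Also, if you had an almost-finite-generation statement for $A_{\pfd}$ over $R$, the trace step would be superfluous, since a finitely generated submodule of $A[\frac{1}{p}]$ already lies in some $p^{-N}A$. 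As written, your trace bound controls $A^{+}$, which you have not shown contains $A_{\pfd}$.

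\textbf{Two smaller points.}
\begin{enumerate}
\item ``Passing to the quotient'' does not prove that $A_{\pfd}$ is $p$-torsion free. The quotient $A_{\pfd}/A_{\pfd}[p^{\infty}]$ is another perfectoid $A$-algebra, but nothing forces it to equal $A_{\pfd}$. The paper instead cites \cite[Lemma A.2]{ma2022Analogue}.
\item In the trace step, $\mathrm{Hom}_R(A,R)$ need not be finitely generated, since $R$ is typically not coherent. So finite presentation does not give a cokernel killed by a fixed $p^{c}$. Instead, take a dual basis $x_i,y_i\in A[\frac{1}{p}]$ with $z=\sum_i\mathrm{Tr}(zy_i)x_i$ for the perfect trace pairing, and scale $x_i$ and $y_i$ into $A$.
\end{enumerate}

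\textbf{How the paper proceeds.} The paper needs no integrality at all. Once $A\subseteq A_{\pfd}\subseteq A[\frac{1}{p}]$ with equality after inverting $p$, note that both rings are derived $p$-complete. Hence $A_{\pfd}/A$ is a $p^{\infty}$-torsion derived $p$-complete module, so it is killed by a fixed power of $p$ by \cite[Tag 0CQY]{stacks-project}, a Baire category argument; this is \Cref{boundedtorsion_iso_awayfromp_completemodules}. The bound comes from completeness, not from traces.

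\textbf{Possible repair.} To keep an integral-closure route, use the universal property in the opposite direction. Produce a perfectoid $P$ with $A\subseteq P\subseteq p^{-N}A$. Then the canonical map $A_{\pfd}\to P$ is injective, because it is an isomorphism after inverting $p$ and $A_{\pfd}$ is $p$-torsion free. But proving that such a $P$ (for instance one built from $A^{+}$) is perfectoid is itself a nontrivial almost purity input.
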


\begin{proof}Let us evacuate any potential imprecision from the last sentence of the statement: recall that because $A$ is supposed to be $p$-torsion free, $A_{\pfd}$ is also by \cite[Lemma A.2]{ma2022Analogue} -- see also \cite[Corollary 2.5]{struc_pfd}. Therefore by \cite[Theorem 10.9]{bhatt2022Prismsa} which implies that $A\to A_{\pfd}$ is an isomorphism when inverting $p$, the rings $A$ and $A_{\pfd}$ can be seen as subrings of $A[\frac{1}{p}]$ which are equal when inverting $p$. 


Now $A$ and $A_{\pfd}$ are both (classically) derived $p$-complete: $A$ is derived $p$-complete because it is a finitely presented $R$-module, and $R$ is (classicaly) derived $p$-complete because it is perfectoid -- same goes for $A_{\pfd}$. Now using \Cref{boundedtorsion_iso_awayfromp_completemodules} for $A\subseteq A_{\pfd}$, this concludes.
\end{proof}

\begin{lemma}[{cf.~\cite[Theorem 2.3]{bhatt2019Torsion}}]\label{boundedtorsion_iso_awayfromp_completemodules} Let $M'\subseteq M$ be an inclusion of derived $p$-complete abelian groups. If $M'[\frac{1}{p}]=M[\frac{1}{p}]$, then there is a $n_0\in \N$ such that $p^{n_0}M\subseteq M'$. Note in particular that the subspace topology on $M'$ for the $p$-adic topology on $M$ agrees with the $p$-adic topology on $M'$.
    
\end{lemma}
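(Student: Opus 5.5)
The natural route is through the quotient $Q \defeq M/M'$. Since $M'[\frac{1}{p}]=M[\frac{1}{p}]$ and localization is exact, $Q[\frac{1}{p}]=0$, so $Q$ is $p^\infty$-torsion. The statement then amounts to showing that $Q$ has \emph{bounded} $p^\infty$-torsion, i.e.\ $p^{n_0}Q=0$. This is exactly the shape of \cite[Theorem 2.3]{bhatt2019Torsion}, and I will reproduce its idea, which is a Baire-category style argument rather than a Noetherian one.

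First, I will put a derived $p$-complete structure on $Q$. Derived $p$-complete abelian groups form a weak Serre subcategory: they are closed under kernels, cokernels and extensions, because derived $p$-completeness is the vanishing of $\mathrm{Ext}^i(\mathbb{Z}[\frac{1}{p}],-)$ for $i=0,1$. The long exact sequence for $0\to M'\to M\to Q\to 0$ therefore shows that $Q$ is derived $p$-complete.

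It then remains to show that a derived $p$-complete $p^\infty$-torsion abelian group $Q$ is killed by some $p^{n_0}$. I will argue by contradiction. If no power of $p$ kills $Q$, choose elements $q_k\in Q$ whose exact orders $p^{e_k}$ grow fast, say $e_{k+1}> e_k+k+1$. Derived completeness lets us form ``convergent'' sums $\sum_k p^{a_k}q_k$ for any sequence $a_k\to\infty$. Concretely, the map $\bigoplus_k \mathbb{Z}\to Q$ extends to a map from the derived $p$-completion of $\bigoplus\mathbb{Z}$, which is the group of null sequences in $\prod\mathbb{Z}_p$. Choose $a_k=e_k-k$, so that $p^{a_k}q_k$ has order exactly $p^{k}$. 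The element $s=\sum_k p^{a_k}q_k$ must be killed by some $p^N$. Multiplying by $p^N$ and projecting, using that the tails are controlled by the growth of the $e_k$, yields a nonzero contribution from the $k=N+1$ term, which is a contradiction.

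The delicate point, and the main obstacle, is making ``projecting'' rigorous, since $Q$ need not be separated. To handle it, I would instead follow Bhatt's cleaner argument. Let $Q[p^n]$ denote the $p^n$-torsion. Write $Q=\varinjlim Q[p^n]$, and use that $\mathrm{R}\lim$ of the Koszul tower $Q\xleftarrow{p}Q\xleftarrow{p}\cdots$ vanishes (this is exactly the derived completeness condition). From this, derive that the pro-system $\{Q[p^n]\}$ with multiplication by $p$ transitions is pro-zero, using that $\lim$ and $\lim^1$ of the $T_p$-tower both vanish together with a Mittag-Leffler argument. Pro-zero gives some $c$ with $p^c Q[p^{n+c}]=0$ for all $n$. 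Taking the union over $n$ then gives $p^cQ=0$, so $n_0=c$ works.

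The final remark about topologies follows directly from the containments $p^{n+n_0}M\subseteq p^nM'\subseteq p^nM\cap M'$.
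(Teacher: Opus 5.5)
Your reduction is the same as the paper's: $Q=M/M'$ is $p^\infty$-torsion and derived $p$-complete, and your topological remark is fine. The paper then simply invokes \cite[Tag 0CQY]{stacks-project}, whose proof uses the Baire category theorem. The gap is in your own proof of that boundedness statement, specifically the claim that the tower $\{Q[p^n],\times p\}$ is pro-zero.

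From $\mathrm{R}\lim\{Q,\times p\}=0$ you only get $\lim_n Q[p^n]=0$. The short exact sequence of towers
\[0\to\{Q[p^n],\times p\}\to\{Q,\times p\}\to\{p^nQ,\subseteq\}\to 0\]
(with the quotient identified via $x\mapsto p^nx$) gives $\lim^1_nQ[p^n]\cong\bigcap_np^nQ$. So vanishing of $\lim^1$ is equivalent to $p$-adic separatedness of $Q$, which is not known in advance.

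More seriously, even $\lim=\lim^1=0$ does not give Mittag-Leffler or pro-zero for towers of uncountable groups. Take $Q=(\prod_n\Z/p^n)_{\mathrm{tors}}$: it is unbounded $p^\infty$-torsion, and its tower $\{Q[p^k],\times p\}=\prod_n\{(\Z/p^n)[p^k],\times p\}$ is a product of pro-zero towers of finite groups, hence has $\lim=\lim^1=0$. Yet $p^j\colon Q[p^{k+j}]\to Q[p^k]$ is nonzero for all $k\geq 1$ and $j\geq 0$ (look at the factor $n=k+j$). So your unspecified ``Mittag-Leffler argument'' has nothing to run on. Pro-vanishing of this tower is exactly the content of the lemma and needs a non-formal input such as Baire category. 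Your last step (pro-zero implies a uniform bound) is correct but deserves a line: if $p^{N-1}Q[p^N]=0$ then $Q[p^N]=Q[p^{N-1}]$, and inductively $Q=Q[p^{N-1}]$.

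To close the gap you can cite \cite[Tag 0CQY]{stacks-project} (or Bhatt's theorem) as the paper does. Alternatively, you can repair your first, diagonal-sum sketch. Its real obstruction is not separatedness as such: choosing exact orders does not prevent $p^{N+a_{N+1}}q_{N+1}$ from lying in $D=\bigcap_Mp^MQ$. The repair splits into two cases.

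\emph{Case $Q/D$ unbounded.} Choose inductively:
\begin{enumerate}
\item $b_K\geq\max_{j<K}M_j$;
\item $x_K$ whose image in $Q/D$ has order large enough that $p^Ks_K\notin D$, where $s_K=\sum_{k\leq K}p^{b_k}x_k$;
\item $M_K$ with $p^Ks_K\notin p^{M_K}Q$.
\end{enumerate}
Let $s=\varphi((p^{b_k})_k)$, where $\varphi$ extends $e_k\mapsto x_k$ to the completion of $\bigoplus_k\Z$. The tail beyond $N$ is $p^{M_N}$ times a null sequence, so $p^Ns\in p^Ns_N+p^{M_N}Q$. Hence $p^Ns\neq 0$ for every $N$, a contradiction.

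\emph{Case $p^cQ\subseteq D$ for some $c$.} Then $D=p^cQ=pD$, and $D$ is derived complete, being $\cong Q/Q[p^c]$. The tower $\{D[p^n],\times p\}$ has surjective transitions, so $\lim^1_nD[p^n]=0$. The computation above gives $D\cong\lim^1_nD[p^n]$, hence $D=0$ and $p^cQ=0$.
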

\begin{proof}Because $M/M'$ is a $p$-torsion derived $p$-complete module, the result follows from \cite[\href{https://stacks.math.columbia.edu/tag/0CQY}{Tag 0CQY}]{stacks-project} which uses Baire category's theorem \cite[\href{https://stacks.math.columbia.edu/tag/0CQU}{Tag 0CQU}]{stacks-project} as a key input.
    
\end{proof}

\subsubsection{Monogenic case}

In this section we improve \Cref{hidden_fin} in the special monogenic case $A=R[t]/(m(t))$ where $m(t)$ is a monic polynomial. Namely, we show that the discriminant of the extension (\Cref{disc_def}) suffices and provides a control on the $n_0\in \N$ from \Cref{hidden_fin}. The proof of \Cref{boundeddtorsmonic} is simpler but goes along the same lines as the one in \cite[Proposition 2.4.7]{cai2023Perfectoid}.

\begin{rem}\label{disc_def}In what follows we consider the quantity,
\[d\defeq \Res(m(t),m'(t))\in R \]
    the \emph{resultant} of the polynomials $m(t)$ and $m'(t)$. The resultant of a given polynomial \(f(t)\) and its derivation \(f'(t)\) is called the \emph{discriminant} of \(f(t)\).\footnote{Sometimes another sign convention is taken.} If \(R\) is an integral domain, \(\Res(f(t), f'(t))\) becomes zero exactly when \(f(t)\) has a multiple root.
In general, for two polynomials $f(t)$ and $g(t)$ with coefficient in $B[t]$ of degrees $n$ and $m$ respectively where $B$ is any ring, the resultant \(\Res(f(t), g(t))\) is defined as the determinant of the map of free $B$-modules of rank $n+m$
\begin{align*}
    B[t]_{<n}\oplus B[t]_{<m}\to B[t]_{<n+m} \\
    (a(t),b(t))\mapsto a(t)g(t)+b(t)f(t)
\end{align*}
where the symbol \((-)_{< n}\) means that the \(B\)-submodules of \(B[t]\) consisting of polynomials whose degree is strictly less than \(n\).
One verifies that
\begin{enumerate}
    \item For any $b\in B$, we have $\Res(t-b,g(t))=g(b)$.
    \item If $f(t)=f_1(t)f_2(t)\in B[t]$, then $\Res(f(t),g(t))=\Res(f_1(t),g(t))\Res(f_2(t),g(t))$.
\end{enumerate}
Note that the resultant is functorial in ring maps which preserves the degree of $f$ and $g$. Therefore, to prove the above one can suppose that $B$ is a domain, namely a polynomial $\Z$-algebra where the indeterminates are the coefficients of $f$ and $g$. Then one can reduce the arguments to the case where $B$ is an algebraically closed field where it follows from \cite[Chapter IV, Section 8]{Lang2002_Algebra}. 

If $m(t)$ splits, then the evaluation map at all roots of $m(t)$ is injective when $d$ is a non-zero divisor and is an isomorphism when $d$ is invertible. Therefore, in general, as the property of being étale can be checked after faithfully flat base change, $R\to R[t]/m(t)$ is finite étale away from $d$.
    
\end{rem}

We now refine \Cref{hidden_fin} in the monogenic case.

\begin{thm}\label{boundeddtorsmonic}
Let $R$ be a perfectoid ring, $m(t)\in R[t]$ be a monic polynomial. Let $A=R[t]/m(t)$ and $$d\defeq \Res(m(t),m'(t))\in R.$$
Then $\Cone(A\to A_{\pfd})$ is killed by $d$.

In particular if $d$ is a non-zero divisor in $R$\footnote{This condition implies that \(m(t)\) can not be written in any faithfully flat extension has a split polynomial with multiple roots.} and that $R/dR$ is of bounded $p^{\infty}$-torsion, then $A_{\pfd}$ is $d$-torsion free and a subring of $A[1/d]$ (\Cref{nzd_reassuring}), in which case we have
\[dA_{\pfd}\subseteq A\]
\end{thm}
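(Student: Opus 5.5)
The plan is to reduce to the split case by a $p$-completely faithfully flat perfectoid base change, and there control the cone using the Vandermonde matrix, whose determinant squares to $\pm d$. I read ``$\Cone(A\to A_{\pfd})$ is killed by $d$'' as the statement that every cohomology group of the cone is killed by $d$.

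\emph{Reduction to the split case.} As in the proof of \Cref{syntomic_nzd_application}(1), choose a finite free syntomic $R\to R_1$ over which $m(t)$ splits. Then use \cite[Proposition 7.11]{bhatt2022Prismsa} to get a perfectoid $R'$, $p$-completely faithfully flat over $R$, over which $m(t)=\prod_{i=1}^n (t-\alpha_i)$ with $\alpha_i\in R'$. Put $A'=A\widehat{\otimes}^L_R R'=R'[t]/(m(t))$; it is discrete since $A$ is finite free. Perfectoidization commutes with base change along perfectoid $R\to R'$ (derived $p$-completed), so
\[\Cone(A\to A_{\pfd})\widehat{\otimes}^L_R R'\simeq \Cone(A'\to A'_{\pfd}).\]
By $p$-complete faithful flatness, a derived $p$-complete $R$-module $M$ vanishes iff $M\widehat{\otimes}^L_R R'$ does. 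Applying this to $\ker(d)$ and $\operatorname{coker}(d)$ on each cohomology group, it suffices to show that $d$ kills the cohomology of $\Cone(A'\to A'_{\pfd})$. The delicate point here is $p$-completeness of the cohomology groups, since the cone may not be discrete. I would handle it by working with the cone as a derived $p$-complete complex, using that $R'$ is $p$-completely flat so cohomology commutes with the base change.

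\emph{The split case.} Now $R'$ is perfectoid and $m$ splits. Consider the evaluation map $\mathrm{ev}\colon A'\to\prod_{i=1}^n R'$, $f\mapsto (f(\alpha_i))_i$. In the basis $1,t,\dots,t^{n-1}$ it is given by the Vandermonde matrix $V=(\alpha_i^{j})$, and by \Cref{disc_def} (multiplicativity and $\Res(t-b,g)=g(b)$) we have $\det(V)^2=\pm d$. Set $\psi=\det(V)\cdot\mathrm{adj}(V)^{\mathrm{T}}$-type map, precisely $\psi\colon \prod R'\to A'$, $\psi=\pm\det(V)\,\mathrm{adj}(V)$. Then
\[\psi\circ\mathrm{ev}=d\cdot\mathrm{id}_{A'},\qquad \mathrm{ev}\circ\psi=d\cdot\mathrm{id}.\]
Since $\prod R'$ is perfectoid, $\mathrm{ev}$ factors uniquely as $A'\xrightarrow{\iota}A'_{\pfd}\xrightarrow{e}\prod R'$. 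Put $\phi=\psi\circ e\colon A'_{\pfd}\to A'$, an $R'$-linear map with $\phi\circ\iota=d$.

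It remains to get $\iota\circ\phi=d$ on $A'_{\pfd}$, or at least up to something killed by $d$. This is the main obstacle, because $A'_{\pfd}$ need not be discrete or injective into $\prod R'$ in general. The first thing I would try is to use that $e$ becomes an isomorphism after inverting $d$: both sides are perfectoid, and $A'[1/d]\to\prod R'[1/d]$ is an isomorphism, so \cite[Theorem 10.9]{bhatt2022Prismsa} applies. Then the fiber of $e$ is $d$-power torsion, and one needs to show it is actually killed by $d$. An alternative route is to apply the arc-descent injectivity \cite[Lemma 2.3]{struc_pfd} to the arc cover $A'\to\prod R'$ of \Cref{syntomic_nzd_application}, which shows that $e$ is injective on $H^0$. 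Granting this, for $y\in A'_{\pfd}$ one has
\[e(\iota\phi(y))=\mathrm{ev}\,\psi\, e(y)=d\,e(y),\]
so $\iota\phi(y)=dy$. The two identities $\phi\iota=d$ and $\iota\phi=d$ then give that $d$ kills the cohomology of the cone.

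\emph{The ``in particular'' part.} Under the non-zero-divisor and bounded torsion hypotheses, \Cref{nzd_reassuring} gives that $A_{\pfd}$ is discrete, $d$-torsion free, and embedded in $A[1/d]$. The cokernel $A_{\pfd}/A$ is then $H^0$ of the cone, so it is killed by $d$, that is, $dA_{\pfd}\subseteq A$.
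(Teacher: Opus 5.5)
Your split-case argument is correct and follows the paper's route: a perfectoid, $p$-completely faithfully flat $R'$ over which $m$ splits, the Vandermonde evaluation $A'\to\prod R'$, its factorization through $A'_{\pfd}$, and injectivity of $e$ from the arc cover and \cite[Lemma 2.3]{struc_pfd}. Your worry about discreteness is unnecessary. $A'_{\pfd}$ is discrete by \cite[Theorem 10.9]{bhatt2022Prismsa}, so the cone is the two-term complex $[A'\xrightarrow{\iota}A'_{\pfd}]$. Then $\phi\iota=d$ and $\iota\phi=d$ say exactly that $\phi$ is a null-homotopy of multiplication by $d$. This is a real simplification. The paper only shows that kernel and cokernel are killed by $\det(V)$, and then needs \cite[Lemma 3.2]{Bhatt_2012} to conclude that $\det(V)^2=\pm d$ kills the cone in the derived category. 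That derived meaning is what the statement asserts. So your cohomological reading proves less than claimed, even though your construction gives the full statement over $R'$.

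The gap is the descent from $R'$ to $R$, which as written does not work. First, vanishing of $\ker(d)$ and $\operatorname{coker}(d)$ on $H^i$ would mean $d$ acts bijectively, not by zero. What you must show vanishes is the image $d\cdot H^i(C)$, and that requires $-\widehat{\otimes}^L_R R'$ to commute with cohomology and images. For a merely $p$-completely flat $R'$ this is standard for modules of bounded $p^\infty$-torsion, but you have no such control here.

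For $H^{-1}(C)$ you can argue directly: $A\hookrightarrow A'$ and $A_{\pfd}\hookrightarrow A'_{\pfd}$ give $\ker(A\to A_{\pfd})=A\cap\ker(A'\to A'_{\pfd})$. For $H^0(C)=A_{\pfd}/\operatorname{im}(A)$, however, you would need the purity statement $\operatorname{im}(A')\cap A_{\pfd}=\operatorname{im}(A)$ inside $A'_{\pfd}$. Nothing in your argument controls $H^0(C)$, and in the first assertion $d$ is arbitrary. The paper does not descend cohomology by hand. It descends the derived statement via the first part of the proof of \cite[Proposition 2.4.7]{cai2023Perfectoid}, which you would have to invoke or reprove.

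For the ``in particular'' assertion alone there is a direct fix.
\begin{itemize}
\item By \cite[Lemma 5.2]{struc_pfd}, $d$ stays a non-zero divisor on $R'$. Hence $A\to A_{\pfd}$ and $A'\to A'_{\pfd}$ are injective.
\item Since $A_{\pfd}$ is $d$-torsion-free, each $y\in A_{\pfd}$ satisfies $d^ky=a\in A$ for some $k$.
\item The split case gives $dy\in A'$. Hence $a\in A\cap d^{k-1}A'=d^{k-1}A$. This equality uses injectivity of $R/d^{k-1}R\to R'/d^{k-1}R'$, which holds because the source is $p$-adically separated with bounded $p^\infty$-torsion and $R/p^n\to R'/p^nR'$ is faithfully flat.
\item Finally, $d$-torsion-freeness of $A_{\pfd}$ gives $dy\in A$.
\end{itemize}
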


\begin{proof} 
    Let $R\to R'$ be a $p$-completely faithfully flat extension where $m(t)$ splits in $R'$. For example, one could use André's lemma \cite[Theorem 7.14]{bhatt2022Prismsa}. Denote by $A'$ the derived $p$-complete derived base change of $A$ to $R'$. As $A$ is a finite free $R$-module and that $R$ is of bounded $p^{\infty}$-torsion, this base change is discrete by \cite[Lemma 5.1 (1)]{struc_pfd}.

  In this case, the monic polynomial \(m(t)\) splits in \(R'\) and we can consider an evaluation map
    \[A'\to R'\times \cdots \times R'\]
This map is a map between free modules of the same rank that can be represented by the Vandermonde matrix $V$ of the roots. We have that $d=\det(V)^2$ up to sign. Therefore it follows that the kernel and cokernel of this map is killed by $\det(V)$ by Cramer's rule. Now using universal property of perfectoidization we have a factorization
\[A'\to A'_{\pfd}\to R'\times \cdots \times R'\]
   where the second map is injective because the evaluation map is an arc-cover by the proof of \Cref{syntomic_nzd_application} and \cite[Lemma 2.3]{struc_pfd} -- therefore the above argument implies that the kernel and cokernel of $A'\to A'_{\pfd}$ is killed by $\det(V)$. Therefore by \cite[Lemma 3.2]{Bhatt_2012}, $\Cone(A'\to A'_{\pfd})$ is killed by $d=\det(V)^2$. By the first part of the proof of \cite[Proposition 2.4.7]{cai2023Perfectoid} it implies the claim before $p$-completely faithfully flat base change to $R'$.
\end{proof}

\begin{example}\label{not_syntomic} We see the behavior of \Cref{boundedptors} in the following example. We note that this may also hint that \Cref{boundeddtorsmonic} may generalize to finite locally free algebras, where the discriminant is defined in general (\cite[\href{https://stacks.math.columbia.edu/tag/0BVH}{Tag 0BVH}]{stacks-project}) as the example below is not even syntomic.
Let $R$ be a $p$-torsion free ring. Consider 
  \[A\defeq \{(r_1,r_2,r_3)\subseteq R\times R\times R \mid r_1\equiv r_2\equiv r_3 \mod p \}\]
  We claim that it's a finite free module of rank $3$ over $R$. 
  Namely, any element is of the form $(r,r+p\alpha,r+p\beta)$ for unique $(r,\alpha,\beta)\in R^3$. Therefore we see that $(1,1,1),(0,p,0),(0,0,p)$ form an $R$-basis for example. We claim that there is a presentation of the form
  \[R[x,y]/(xy,x^2-px,y^2-py).\]
  Note that this finite and faithfully flat extension is not syntomic because it is not modulo $p$. The discriminant can be computed from the following observations: $\Tr_{A / R}(y)=\Tr_{A / R}(x)=p$ and $\Tr_{A / R}(x^2)=\Tr_{A / R}(y^2)=p^2$: the matrix of the trace is therefore
  \[ \begin{pmatrix}
3 & p & p \\
p & p^2 & 0 \\
p & 0 & p^2 
\end{pmatrix}  \]
We deduce that discriminant ideal is $(p^4)$. Note that inverting $p$ turns the defining inclusion $A\subseteq R^3 $ to an isomorphism.

If $R$ is perfectoid, using \cite[Proposition 2.7]{struc_pfd}, \cite[Corollary 8.12]{bhatt2022Prismsa} we have
\[\begin{tikzcd}
A_{\pfd} \arrow[d] \arrow[r] & R\times R\times R \arrow[d]         \\
(A/pA=R/pR)_{\pfd} \arrow[r] & (R/pR\times R/pR\times R/pR)_{\pfd}
\end{tikzcd}\]
And therefore:
\[A_{\pfd}=\{(r_1,r_2,r_3) \mid r_1\equiv r_2\equiv r_3 \mod \sqrt{(p)}\}\]
Note that 
\[pA_{\pfd}\subseteq A.\]
Therefore it can happen that there is an element which divides the discriminant with this property already: this shows that the behavior in \Cref{boundeddtorsmonic} is not ``sharp''. Note also that $x=(p^{p^{\frac{p-1}{p}}},0,0)$ is in $A^{\pfd}$. But $p^{1/p}x=(p^{1/p},0,0)$ which is not in $A$. Therefore we see that $A\to A_{\pfd}$ is not a $p$-almost isomorphism.
    
\end{example}

\subsection{$p$-root closure and perfectoidization for finite maps}

As a consequence of \Cref{boundedptors} and \Cref{approxppowerpfd}, we get the following nice result, which echoes \cite[Theorem 3.9]{struc_pfd}.

The same result appears in \cite[Remark 3.5.4(2)]{bhatt2026Aspects} with a slightly different approach.

\begin{proposition}
    
\label{prootclosed_fetwayfromp} Let $R$ be a $p$-torsion free perfectoid ring and let $R\to A$ be a finitely presented finite map to a $p$-torsion free ring such that the Zariski localization $R_p\to A_p$ is finite étale. Then $A_{\pfd}$ is the $p$-root closure of $A$ in $A[\frac{1}{p}]$. More precisely, it holds that 
\[A_{\pfd}=\{x\in A[\frac{1}{p}] \mid \exists m\in \N \quad x^{p^m}\in A\}.\]
\begin{proof} 
    Let $n_0\in \N$ be an integer such that
    \[p^{n_0}A_{\pfd}\subseteq A,\]
    which exists by \Cref{boundedptors}. Now take any element $y\in A_{\pfd}$. By \Cref{approxppowerpfd}, there is some $x\in A$ and $m\in \N$ such that
    \[y^{p^m}\in x+p^{n_0}A_{\pfd}\subseteq A.\]
    Therefore we see that $A_{\pfd}$ is a subring of the $p$-root closure of $A$ in $A[\frac{1}{p}]$. But this was actually the non trivial inclusion: indeed using the fact that $A_{\pfd}$ is $p$-root closed in $A_{\pfd}[\frac{1}{p}]$ because it is perfectoid, we see that it immediately implies that the $p$-root closure of $A$ in $A[\frac{1}{p}]$ is a subring of $A_{\pfd}$. The last part of the statement follows from \cite[Proposition 1]{roberts2008Root}.
\end{proof}
    
\end{proposition}
\begin{rem}
    It is highly noticeable that no $p$-completion is needed in \Cref{prootclosed_fetwayfromp}. This phenomenon is in contrast from the case of semiperfectoid rings, see \cite[Remark 3.11]{struc_pfd}.
\end{rem}

\subsection{Density principles}

In this section, we explain that in good cases, to get from $A$ to $A_{\pfd}$ when $R\to A$ is finite and finitely presented map, amounts to adding $p$-roots of elements modulo $p$.


\begin{thm}\label{densityprinciple}
Say $R$ perfectoid ring
and $d\in R$ is a non-zero divisor.
Say that $R\to A$ is a finite and finitely presented map étale outside of $V(d)$, such that $d$ is also a non-zero divisor in $A$.
In this case, $A$ can be seen as a subring of $A_{\pfd}$.
Take an intermediate subring
\begin{equation*}
    A \subseteq C \subseteq A_{\pfd}
\end{equation*}
such that \(C/pC\) is semiperfect.

Then \(C\) is dense in \(A_{\pfd}\) for the subspace topology where $A_{\pfd}$ is equipped with the $p$-adic topology -- more precisely the natural morphism
 \[\widehat{C} \to  A_{\pfd}\] 
 is surjective, where \(\widehat{C}\) is the \(p\)-adic completion of \(C\).


\end{thm}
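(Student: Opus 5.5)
The plan is to avoid elementwise approximation and argue through the universal property of perfectoidization, using that the $p$-adic completion $\widehat{C}$ is semiperfectoid. First, since $d$ is a non-zero divisor in $A$, the kernel of $A\to A_{\pfd}$ vanishes, so $A\subseteq C\subseteq A_{\pfd}$ makes sense; this is the embedding already discussed in \Cref{nzd_reassuring} and \Cref{boundeddtorsmonic}. Because $A_{\pfd}$ is perfectoid, it is $p$-adically complete. The inclusion $C\hookrightarrow A_{\pfd}$ therefore extends to a ring map $\widehat{C}\to A_{\pfd}$, and the statement to prove is that this map is surjective.

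\emph{Step 1: $\widehat{C}$ is semiperfectoid.} We have $\widehat{C}/p\widehat{C}=C/pC$, which is semiperfect by hypothesis. I would show that a $p$-adically complete ring $S$ with $S/pS$ semiperfect, receiving a map from the perfectoid ring $R$, is a quotient of a perfectoid ring. The argument runs as follows.
\begin{itemize}
\item Since $S/pS$ is semiperfect, the projection $S^\flat=\lim_{x\mapsto x^p}S/pS\to S/pS$ is surjective.
\item Hence every element $s\in S$ admits $x\in S^\flat$ with $x^\sharp\equiv s$ modulo $p$.
\item Iterating this $p$-adically and using completeness of $S$, the map $\theta\colon W(S^\flat)\to S$ is surjective. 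Alternatively, one can produce a surjection from a $p$-completed perfect polynomial algebra $R\abracket{X_i^{1/p^\infty}}$, sending $X_i^{1/p^n}$ to $(x_i^{1/p^n})^\sharp$.
\end{itemize}
Both source rings are perfectoid: the first because $S^\flat$ is perfect, and the second is a $p$-completed perfect polynomial algebra over the perfectoid ring $R$. So $\widehat{C}$ is semiperfectoid. By Bhatt--Scholze, the map $\widehat{C}\to(\widehat{C})_{\pfd}$ is then surjective, since the perfectoidization of a semiperfectoid ring is a quotient of it.

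\emph{Step 2: $A_{\pfd}$ is a retract of $(\widehat{C})_{\pfd}$.} The composite $A\to\widehat{C}\to(\widehat{C})_{\pfd}$ lands in a perfectoid ring, so by the universal property it factors through a map $\alpha\colon A_{\pfd}\to(\widehat{C})_{\pfd}$. In the other direction, $\widehat{C}\to A_{\pfd}$ has perfectoid target and hence factors as $\widehat{C}\to(\widehat{C})_{\pfd}\xrightarrow{\beta}A_{\pfd}$. The composite $\beta\circ\alpha\colon A_{\pfd}\to A_{\pfd}$ is compatible with the structure map from $A$, because $A\to\widehat{C}\to A_{\pfd}$ is the canonical map. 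By the uniqueness part of the universal property, $\beta\circ\alpha=\mathrm{id}$. Hence $\beta$ is surjective, and $\widehat{C}\to A_{\pfd}$, being the composite of two surjections $\widehat{C}\twoheadrightarrow(\widehat{C})_{\pfd}\twoheadrightarrow A_{\pfd}$, is surjective.

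\emph{Step 3: density.} Surjectivity of $\widehat{C}\to A_{\pfd}$ gives $C+p^nA_{\pfd}=A_{\pfd}$ for every $n$: any $y\in A_{\pfd}$ is the image of some $\sum_i p^ic_i$, and the truncation $\sum_{i<n}p^ic_i$ lies in $C$ and agrees with $y$ modulo $p^n$. This is exactly density of $C$ in the $p$-adic topology. The main obstacle I expect is Step 1, namely checking carefully that $p$-completeness together with a semiperfect reduction modulo $p$ gives a surjection from a perfectoid ring. That is where I would spend effort, using the standard tilting argument and the successive approximation that \Cref{approxppowerpfd} and the remark after it also rely on.

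A direct elementwise approach is the fallback, but it stalls in a specific place. By \Cref{approxppowerpfd}, for $y\in A_{\pfd}$ one has $y^{p^m}\equiv x$ modulo $p$ with $x\in A$. Semiperfectness of $C/pC$ gives $c\in C$ with $c^{p^m}\equiv x$, hence $y-c\in\sqrt{pA_{\pfd}}$. The difficulty is that the radical exponent is not uniform in $y$. The retract argument of Step 2 is chosen precisely to avoid this.
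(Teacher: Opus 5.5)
Your argument is correct and is essentially the paper's. The paper shows that the image $D$ of $\widehat{C}\to A_{\pfd}$ is a semiperfectoid subring of a perfectoid ring, hence perfectoid; this rests on the same Bhatt--Scholze fact that $S\to S_{\pfd}$ is surjective for semiperfectoid $S$. It then gets $D=A_{\pfd}$ from the same universal-property retract as your Step 2, so working with $(\widehat{C})_{\pfd}$ instead of $D$ is only a repackaging.

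One correction in Step 1: $W(S^\flat)$ is not perfectoid. For instance, $W(k)$ for a perfect field $k$ contains no $\pi$ with $\pi^p\in pW(k)^\times$. So drop that route and keep only the surjection from $R\langle X_i^{1/p^\infty}\rangle$, which is correct.
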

\begin{proof}
    The fact that $A\subseteq A_{\pfd}$ follows from the assumption that $d$ is a non-zero divisor in $A$ and that this map is $d$-isogeny \cite[Theorem 10.9]{bhatt2022Prismsa}.
    
    Since \(\widehat{C}\) is a semiperfectoid ring and the map \(\widehat{C} \to A_{\pfd}\) is a map between \(p\)-complete rings, the image \(D \defeq \Image(\widehat{C} \to A_{\pfd})\) is also $p$-complete and semiperfectoid and is contained in the topological closure \(\overline{C}\) of \(C\) in \(A_{\pfd}\).
    However $D \subseteq A_{\pfd}$ implies that $D$ is perfectoid by \cite[Lemma 2.3]{Dine_2024} (using \cite[Theorem 7.4]{bhatt2022Prismsa}). Because $D \subseteq A_{\pfd}$ factorizes the universal inclusion $A \subseteq A_{\pfd}$ and is perfectoid, it follows that $D=A_{\pfd}$ and especially \(\overline{C} = A_{\pfd}\) and the surjectivity \(\widehat{C} \twoheadrightarrow A_{\pfd}\) holds.

 \end{proof}

 \begin{cor}\label{densityprincipleVp}In the setup of \Cref{densityprinciple}, and when $V(d)=V(p)$ topologically, we can conclude that 
\[\widehat{C}=\overline{C}=A_{\pfd}.\]

 \end{cor}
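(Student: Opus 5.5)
The plan is to combine the surjectivity $\widehat{C}\twoheadrightarrow A_{\pfd}$ from \Cref{densityprinciple} with injectivity. For injectivity it suffices that the $p$-adic topology on $C$ coincides with the subspace topology induced from the $p$-adic topology on $A_{\pfd}$. Once that holds, $\widehat{C}$ is the completion of $C$ for the subspace topology. Since $A_{\pfd}$ is $p$-adically complete and separated (it is perfectoid), that completion is identified with the closure $\overline{C}\subseteq A_{\pfd}$. Together with \Cref{densityprinciple} this gives $\widehat{C}=\overline{C}=A_{\pfd}$.

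\emph{Step 1: reduce to $p$.} Since $V(d)=V(p)$, we have $\sqrt{dR}=\sqrt{pR}$. Concretely, there are $k,l\in\N$ with $d^k\in pR$ and $p^l\in dR$.
\begin{itemize}
\item $A$ is $p$-torsion free. If $pa=0$, then $d^ka=0$, and $d$ is a non-zero divisor on $A$ by hypothesis, so $a=0$.
\item Inverting $p$ is the same as inverting $d$, so $A[\tfrac1p]=A[\tfrac1d]$.
\item Since $A\to A_{\pfd}$ is a $d$-isogeny (\cite[Theorem 10.9]{bhatt2022Prismsa}), $A_{\pfd}[\tfrac1d]=A[\tfrac1d]$. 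Hence the inclusion $A\subseteq A_{\pfd}$ from \Cref{densityprinciple} becomes an equality after inverting $p$.
\end{itemize}

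\emph{Step 2: bounded index.} Both $A$ and $A_{\pfd}$ are derived $p$-complete. For $A$ this holds because it is a finitely presented module over the perfectoid ring $R$; for $A_{\pfd}$ because it is perfectoid. Applying \Cref{boundedtorsion_iso_awayfromp_completemodules} to $A\subseteq A_{\pfd}$ gives $n_0\in\N$ with
\[p^{n_0}A_{\pfd}\subseteq A\subseteq C.\]
Alternatively, one can apply \Cref{hidden_fin} with $J=(d)$, which gives $d^NA_{\pfd}\subseteq A$, and then use $p^{lN}\in d^NR$. I expect this bound to be the only substantive point. It is exactly where $V(d)=V(p)$ is used: without it, $A_{\pfd}/C$ need not be $p$-power torsion of bounded exponent, and the topologies could differ.

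\emph{Step 3: compare topologies.} Trivially $p^nC\subseteq p^nA_{\pfd}\cap C$. Conversely, let $x=p^{n+n_0}y\in C$ with $y\in A_{\pfd}$. Then $p^{n_0}y\in C$ by Step 2, so $x\in p^nC$. Hence
\[p^nC\subseteq p^nA_{\pfd}\cap C\quad\text{and}\quad p^{n+n_0}A_{\pfd}\cap C\subseteq p^nC,\]
so the two filtrations on $C$ are cofinal. This is the agreement of topologies claimed at the start, and as explained there it yields $\widehat{C}=\overline{C}=A_{\pfd}$.
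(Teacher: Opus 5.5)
Your proof is correct. Its key input is the same as the paper's: the bound $p^{n_0}A_{\pfd}\subseteq A\subseteq C$. The paper quotes this from \Cref{boundedptors}; you obtain it directly from \Cref{boundedtorsion_iso_awayfromp_completemodules}, which is the lemma that proves \Cref{boundedptors}.

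Where you differ is in turning this bound into injectivity of $\widehat{C}\to A_{\pfd}$.
\begin{itemize}
\item The paper applies derived $p$-completion to $0\to C\to A_{\pfd}\to A_{\pfd}/C\to 0$. It then uses that $\pi_1\Lambda_p(A_{\pfd}/C)=0$, because $A_{\pfd}/C$ has bounded $p^{\infty}$-torsion.
\item You instead compare the filtrations $p^nC$ and $p^nA_{\pfd}\cap C$ by hand. This is the last sentence of \Cref{boundedtorsion_iso_awayfromp_completemodules}, applied to $C$ instead of $A$.
\end{itemize}
Your route is more elementary. It never needs to identify the classical completion $\widehat{C}$ with the derived one, and it exhibits $\widehat{C}\cong\overline{C}$ directly. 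It also spells out, via $\sqrt{dR}=\sqrt{pR}$, how the hypothesis $V(d)=V(p)$ enters. The paper leaves this implicit when it invokes \Cref{boundedptors}.

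Your Step 2 actually gives more than you use. Since $C\supseteq p^{n_0}A_{\pfd}$, the subgroup $C$ is open in $A_{\pfd}$ for the $p$-adic topology, and an open subgroup is closed. The density from \Cref{densityprinciple} then already forces $C=A_{\pfd}$. This makes Step 3 unnecessary, and shows that no completion is needed in this situation, in line with \Cref{prootclosed_fetwayfromp}.
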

\begin{proof}

From \Cref{boundedptors}, we see that $A_{\pfd}/C$ is of bounded $p^{\infty}$-torsion.
Now, if we denote derived $p$-completion by $\Lambda_p$, and apply it to the exact sequence
\[0\to C\to A_{\pfd}\to A_{\pfd}/C\to 0\]
we get a fiber sequence of derived $p$-complete modules
\[\widehat{C}\to A_{\pfd}\to \Lambda_p \left( A_{\pfd}/C\right) \]
But because $A_{\pfd}/C$ is of bounded $p^{\infty}$-torsion, we have $\pi_1(\Lambda_p\left( A_{\pfd}/C\right) )=0$, implying in the end that 
\[\widehat{C}\to A_{\pfd}\]
is injective as wanted.

\end{proof}



\section{Computations}\label{sec_ex}

\subsection{Adding $m$-th roots.}\label{mth_roots_section}
In this subsection, we concentrate our attention to the case where $A=R[t]/(t^m-r)$ where $(m,p)=1$.

\begin{thm}
    
\label{ComputePerfectoidizationTm} 
    Let \(R\) be a perfectoid ring. Let $m\in \N$ be an integer such that $(p,m)=1$. Let \(R\abracket{t^{1/p^\infty}}\) be the \(p\)-adic completion of \(\cup_{n \geq 0} R[t^{1/p^n}]\).
    Then the perfectoidization 
    \begin{equation*}
        R\abracket{t^{1/p^\infty}}[t^{1/m}]\to \left(R\abracket{t^{1/p^\infty}}[t^{1/m}]\right)_{\pfd}
    \end{equation*}
    identifies with the natural inclusion  
    \begin{equation*}
        R\abracket{t^{1/p^\infty}}[t^{1/m}]\to R\abracket{t^{1/mp^\infty}}
    \end{equation*}
    where $R\abracket{t^{1/mp^\infty}}$ is the \(p\)-adic completion of \(\bigcup_{n \geq 0} R[t^{1/mp^n}].\)

\end{thm}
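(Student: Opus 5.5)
The plan is to check directly that the target ring \(B \defeq R\abracket{t^{1/mp^\infty}}\) is perfectoid and satisfies the universal property relative to \(A \defeq R\abracket{t^{1/p^\infty}}[t^{1/m}]\).

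First I will check that \(B\) is perfectoid and receives a map from \(A\). Write \(s = t^{1/m}\). Then \(B = R\abracket{s^{1/p^\infty}}\) is the \(p\)-completed perfect polynomial algebra over \(R\), which is perfectoid. There is a natural map \(A \to B\) sending \(t^{1/p^n}\mapsto s^{m/p^n}\) and \(t^{1/m}\mapsto s\). This is well defined because \(A = R\abracket{t^{1/p^\infty}}[x]/(x^m - t)\), which is a finite free extension and hence already \(p\)-complete.

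Second, I will reduce to a characteristic \(p\) statement. The natural approach is \Cref{densityprinciple}, but it requires the base to be perfectoid and \(A\) to be finite over it, which holds here: the base is \(R_0\defeq R\abracket{t^{1/p^\infty}}\), \(d=\pm m^m t^{m-1}\), and \(t\) is a non-zero divisor. Instead I will argue via the universal property. Since \(A\to B\) factors as \(A\to A_{\pfd}\to B\), it suffices to produce a map \(B\to A_{\pfd}\) over \(A\); the two composites are then identities, the one on \(A_{\pfd}\) by initiality and the one on \(B\) by the uniqueness argument below. To produce the map I need, in \(A_{\pfd}\), a compatible system of \(p\)-power roots of \(s\) extending the roots \(t^{1/p^n}\). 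Since \((m,p)=1\), there are integers \(a_n,b_n\) with \(a_n m + b_n p^n = 1\). I will then set
\[s^{1/p^n} \defeq t^{a_n/p^n} s^{b_n},\]
which is formally correct. The problem is that \(a_n\) may be negative, so this expression lives only in \(A[1/t]\).

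The main obstacle is to show that these elements lie in \(A_{\pfd}\) rather than only in \(A_{\pfd}[1/t]\), and this is where perfectoidness must be used. I will use the fact that \(A_{\pfd}\) is \(t\)-torsion-free and embeds into \(A_{\pfd}[1/t]=A[1/t]\); by \Cref{nzd_reassuring}, \(t\) is a non-zero divisor with \(R_0/t\) perfect modulo \(p\), so the bounded torsion hypothesis holds. I will also use that a perfectoid ring is \(p\)-root closed in the appropriate sense, or more precisely that if \(y\in A_{\pfd}[1/t]\) satisfies \(y^{p^n}\in A_{\pfd}\) and \(t\) has compatible \(p\)-power roots, then \(y\in A_{\pfd}\). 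I would try to prove this closure property by working modulo the radical of \((p)\): the ring \(A_{\pfd}/\sqrt{p}\) is perfect, so \(p\)-th roots of elements are unique, and one can then lift using \(t\)-torsion-freeness together with the \(p\)-root closure of \(A_{\pfd}\) inside \(A_{\pfd}[1/p]\) in the \(p\)-torsion-free part of the splitting of \cite[Section 2.1.3]{cesnavicius2023purityflatcohomology}. Since \((s^{1/p^n})^{p^n}=s\in A\), this yields \(s^{1/p^n}\in A_{\pfd}\), and compatibility holds by torsion-freeness. These roots define a map \(\bigcup_n R[s^{1/p^n}]\to A_{\pfd}\), which extends to \(B\) by \(p\)-completeness.

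Finally, for the uniqueness of the composite \(B\to A_{\pfd}\to B\), I will note that it is an \(R\)-algebra map fixing \(s\). It therefore sends \(s^{1/p^n}\) to a \(p^n\)-th root of \(s\) in \(B\). Such a root is unique, since in the \(s\)-torsion-free perfect setting \(s^{1/p^n}\) is determined, as one checks modulo \(p\) in the perfect ring \(B/\sqrt{pB}\) and by \(s\)-torsion-freeness. Hence the composite is the identity on a dense subring, and so on \(B\).
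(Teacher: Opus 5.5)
There is a genuine gap at the step you yourself flag as the main obstacle. You plan to prove a closure property: for a perfectoid ring $S$ and a non-zero divisor $g\in S$ with compatible $p$-power roots, $y\in S[1/g]$ and $y^{p^n}\in S$ imply $y\in S$. This is false in mixed characteristic, because $p$-th roots are not unique there.

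Here is a counterexample. Let $C$ be a complete algebraically closed nonarchimedean field of mixed characteristic $(0,p)$, and put $R=\mathcal{O}_C\abracket{T^{1/p^\infty}}$. Let $S=\{(f_1,f_2)\in R^2 \mid f_1\equiv f_2 \bmod (T)_{\pfd}\}$, i.e. pairs with the same constant term.
\begin{itemize}
\item $S=R\times_{\mathcal{O}_C}R$ is a fibre product of perfectoid rings along surjections, hence perfectoid. For $p\neq 2$ it is the perfectoidization computed in \Cref{example_split_square_root}. It is also $p$-torsion-free.
\item $g=(T,T)$ is a non-zero divisor with compatible $p$-power roots, and $gR^2\subseteq S$ gives $S[1/g]=R[1/T]^2$.
\item $y=(1,\zeta_p)$ satisfies $y^p=1\in S$, but $y\notin S$.
\end{itemize}

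Your sketch breaks exactly here. The element $y$ has no image in $S/\sqrt{pS}$, and $y\notin S[1/p]$. So neither injectivity of Frobenius modulo $\sqrt{p}$ nor $p$-root closedness of $S$ in $S[1/p]$ can be applied. More generally, an element of $S[1/t]$ need not lie in $S[1/p]$ when $t$ and $p$ are unrelated, as in $R\abracket{t^{1/p^\infty}}$. Since $S$ is $p$-torsion-free, the splitting of \cite[Section 2.1.3]{cesnavicius2023purityflatcohomology} does not help either. So nothing in your argument shows $t^{a_n/p^n}s^{b_n}\in A_{\pfd}$, and the map $B\to A_{\pfd}$ is never constructed.

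What is missing is an input controlling $A_{\pfd}$ along $V(t)$, showing that there is no branching over $t=0$. The paper supplies this as follows. It checks $t^{m-1}B\subseteq A$, so $A[1/t]\cong B[1/t]$. It then uses the excision square of \cite[Proposition 2.7]{struc_pfd} (coming from \cite[Corollary 8.12]{bhatt2022Prismsa}) to write $A_{\pfd}\cong B\times_{(B/tB)_{\pfd}}(A/tA)_{\pfd}$. Both bottom terms are $R$ and the bottom map is the identity, so $A_{\pfd}\cong B$.

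Your uniqueness step has a similar, easily fixed, flaw. If $\zeta_{p^n}\in R$, then $\zeta_{p^n}s^{1/p^n}$ is another $p^n$-th root of $s$ in $B$, so $p^n$-th roots of $s$ are not unique. The composite $B\to A_{\pfd}\to B$ is still the identity, because $z^{p^n}=s$ and $z^m=t^{1/p^n}$ together force $z=t^{a_n/p^n}s^{b_n}$ in the $t$-torsion-free ring $B$.
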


\begin{proof}
  Let $k_0$ be the multiplicative order of $p$ in $(\Z/m\Z)^{\times}$. Then for every $n\geq 0$ we have that $m \mid 1-p^{k_0n}$ in $\Z$. Write $l_n\defeq \frac{p^{nk_0}-1}{m}\in \N$. 


    We first argue that the canonical map
    \begin{equation}\label{WantIso}
        R\abracket{t^{1/p^\infty}}[t^{1/m}][1/t]\to R\abracket{t^{1/mp^\infty}}[1/t]
    \end{equation}
    is an isomorphism. We notice that the injectivity follows from the injectivity of $R\abracket{t^{1/p^\infty}}[t^{1/m}]\to R\abracket{t^{1/mp^\infty}}$ and that $t$ is a non-zero divisor in both those rings.
    
    Now, note for every $n\geq 0$ we have that
    \begin{equation}
        \label{PlayWithFractions}
        \frac{1}{mp^{nk_0}}=\frac{1}{m}+\frac{\frac{1-p^{nk_0}}{m}}{p^{nk_0}}=\frac{1}{m}-\frac{l_n}{p^{nk_0}}
    \end{equation}

Note also that $l_n< p^{nk_0}$ by construction. So by \Cref{PlayWithFractions}, for any $n\in \N$, we have
\begin{equation}\label{PlayWithExponents}
    t^{1/mp^{nk_0}}t=(t^{1/mp^{nk_0}}t^{l_n/p^{nk_0}})t^{(p^{nk_0}-l_n)/p^{nk_0}}=t^{1/m}t^{(p^{nk_0}-l_n)/p^{nk_0}}\in  R\abracket{t^{1/p^\infty}}[t^{1/m}].
\end{equation}
For $1\leq i \leq m-1$, the above \Cref{PlayWithExponents} implies that $t^{i/mp^{nk_0}}t^i\in R\abracket{t^{1/p^\infty}}[t^{1/m}]$. Also note that any power series $f\in R\abracket{t^{1/mp^\infty}}$ is a sum of monomnials of the form $q(t)=t^{a/mp^{nk_0}}$ for some $a,n\in \N$ -- and if we write $a=mq+r$ by Euclidean division with $q\in \N$ and $0\leq r<m$, we can rewrite this as $q(t)=t^{q/p^{nk_0}}t^{r/mp^{nk_0}}$. Therefore, we see that for every such monomial $q(t)$ we have $t^{m-1}q(t)\in R[t^{1/p^\infty}][t^{1/m}]$. Therefore, we also see that $t^{m-1}f\in R\abracket{t^{1/p^\infty}}[t^{1/m}]$.


Therefore, this implies that the map in \Cref{WantIso} is surjective. 
    
    We can now use \cite[Proposition 2.7]{struc_pfd} (which is \cite[Corollary 8.12]{bhatt2022Prismsa} explained in this context), to get the following pullback square
    \[\begin{tikzcd}
    \left(R\abracket{t^{1/p^\infty}}[t^{1/m}]\right)_{\pfd} \arrow[d] \arrow[r] & R\abracket{t^{1/mp^\infty}} \arrow[d]    \\
    \left(R\abracket{t^{1/p^\infty}}[t^{1/m}]/tR\abracket{t^{1/p^\infty}}[t^{1/m}]\right)_{\pfd} \arrow[r]      & \left(R\abracket{t^{1/mp^\infty}}/tR\abracket{t^{1/mp^\infty}}\right)_{\pfd}
    \end{tikzcd}\]
    Note that we used that $R\abracket{t^{1/mp^\infty}}$ is perfectoid, being abstractly isomorphic to $R\abracket{t^{1/p^\infty}}$ which is perfectoid (see \cite[Proposition 2.1.11 (a)]{cesnavicius2023purityflatcohomology} for example).
    
    
    Now, note that as perfectoid rings are $p$-complete (by definition) and  reduced \cite[Subsection 2.1.3]{cesnavicius2023purityflatcohomology}, perfectoidizations of rings in the lower part of the pullback square will necessarily factor through the quotients by the $p$-adic closure of $\sqrt{tR\abracket{t^{1/p^\infty}}[t^{1/m}]}$ and $\sqrt{tR\abracket{t^{1/mp^\infty}}}$ respectively. But those quotients identify to $R$, which is already perfectoid, implying that the perfectoidizations are those quotients using that the perfectoidization is the initial map to a perfectoid ring. As after those quotients the lower map in the pullback square identifies with \(R \xrightarrow{\id_R} R\), the proof of the lemma concludes by observing that pullbacks preserve isomorphisms.
\end{proof}

\begin{rem}
    In \cite[Example 2.3.12]{reinecke2020Moduli}, the authors study the perfectoidization of \(K^\circ\abracket{t^{1/p^\infty}}[t^{1/2}]\) for a perfectoid valuation ring \(K^{\circ}\) and proved that the perfectoidization is \((\varpi x)^{1/p^\infty}\)-almost isomorphic to \(K^\circ\abracket{t^{1/2p^\infty}}\) where \(\varpi\) is a pseudo-uniformizer of \(K^{\circ}\). The proof relies on the perfectoid version of the Riemann extension theorem developed in \cite[Theorem 4.2]{bhatt2018Direct} which requires the almost mathematics.
    However, as we proved above, using another technique relying on \cite[Proposition 2.7]{struc_pfd} (which explains why we can use \cite[Corollary 8.12]{bhatt2022Prismsa} in our case), we can show that the perfectoidization of \(K^\circ\abracket{t^{1/p^\infty}}[t^{1/2}]\) is isomorphic to \(K^\circ\abracket{t^{1/2p^\infty}}\) without using almost mathematics.
\end{rem}

\begin{rem}\label{discappears}In the proof of \Cref{ComputePerfectoidizationTm}, the discriminant $t^{m-1}$ of the extension was used. This is not a coincidence, the way it was utilized aligns with \Cref{boundeddtorsmonic}.
    
\end{rem}

The universality of \Cref{ComputePerfectoidizationTm} allows to get the following corollary.

\begin{cor} \label{ComputePerfectoidizationRootm}
    Let \(R\) be a perfectoid ring and let \(r\) be an element of \(R\) which has a compatible sequence of \(p\)-power roots \(\{r^{1/p^n}\}_{n \geq 0}\) in \(R\). Let $m\in \N$ be an integer such that $(m,p)=1$. 
    Then the perfectoidization of \(R[x]/(x^m-r)\) identifies to the natural map
    \[R[x]/(x^m-r)\to \left(R[x^{1/p^\infty}]/(x^{m/p^n}-r^{1/p^n})_{n \geq 0}\right)^{\wedge}\]
    where the right hand side denotes the \(p\)-adic completion of \(R[x^{1/p^\infty}]/(x^{m/p^n}-r^{1/p^n})_{n \geq 0}\).
\end{cor}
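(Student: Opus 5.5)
The plan is to deduce the corollary from \Cref{ComputePerfectoidizationTm} by base change along the map $R\abracket{t^{1/p^\infty}}\to R$ sending $t^{1/p^n}\mapsto r^{1/p^n}$. This map exists and is continuous because $R$ is $p$-complete and the $r^{1/p^n}$ are compatible. Write $P\defeq R\abracket{t^{1/p^\infty}}$, $B\defeq P[t^{1/m}]=P[x]/(x^m-t)$ and $B'\defeq R\abracket{t^{1/mp^\infty}}$. Then
\[R[x]/(x^m-r)\cong B\otimes_P R.\]
Since $B$ is finite free over $P$ (basis $1,x,\dots,x^{m-1}$), this tensor product is already the derived $p$-complete base change, with no higher terms.

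The key input is that perfectoidization commutes with base change along maps of perfectoid rings. Concretely, for $P\to R$ a map of perfectoid rings and $B$ a $P$-algebra, one has
\[(B\widehat{\otimes}^L_P R)_{\pfd}\cong B_{\pfd}\widehat{\otimes}^L_P R,\]
and the right side is again perfectoid, being a pushout of perfectoid rings. This follows from the universal property together with the fact that derived $p$-complete tensor products of perfectoid rings over a perfectoid base are perfectoid and discrete (\cite[Section 8]{bhatt2022Prismsa}). Applying it with \Cref{ComputePerfectoidizationTm}, which says $B_{\pfd}=B'$, gives
\[\left(R[x]/(x^m-r)\right)_{\pfd}\cong B'\widehat{\otimes}^L_{P} R.\]
Discreteness of the right side should hold because $B'$ is a perfectoid $P$-algebra and $P\to R$ is a map of perfectoids, so their derived completed tensor product is a perfectoid ring and in particular discrete.

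It remains to identify $B'\widehat{\otimes}_P R$ explicitly. Write $y=t^{1/m}$, so that
\[B'=\Bigl(\bigcup_n R[y^{1/p^n}]\Bigr)^{\wedge_p},\qquad P=\Bigl(\bigcup_n R[y^{m/p^n}]\Bigr)^{\wedge_p}.\]
Before completion, the tensor product of $\bigcup_n R[y^{1/p^n}]$ over $\bigcup_n R[t^{1/p^n}]$ with $R$ is $R[x^{1/p^\infty}]/(x^{m/p^n}-r^{1/p^n})_{n\ge0}$, where $x\leftrightarrow y$; this is a direct presentation computation. Passing to $p$-completions should then give the claimed formula. The map from $R[x]/(x^m-r)$ is the evident one, since $x\mapsto x$ is compatible with the identification above.

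The main obstacle is this last completion step: one must check that the classical $p$-adic completion of the uncompleted tensor product agrees with the derived $p$-complete tensor product. I would handle it by noting that $\bigcup_n R[y^{1/p^n}]$ is free over $\bigcup_n R[y^{m/p^n}]$. Since $(m,p)=1$, the exponents in $\Z[1/p]_{\ge0}$ modulo $m\Z[1/p]_{\ge0}$ give a monomial basis indexed by residues $0,\dots,m-1$ times fractional shifts; alternatively, flatness follows by writing the extension as the colimit of the finite free extensions $R[y^{m/p^n}]\subset R[y^{1/p^n}]$. Base change of a flat module then commutes with $p$-completion up to bounded torsion. The quotient has bounded $p^\infty$-torsion because it is a completion of a direct sum of copies of $R$, so derived and classical completions agree.
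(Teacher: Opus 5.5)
Your proposal is essentially the paper's proof. Both realize $R[x]/(x^m-r)$ as the base change of $R\abracket{t^{1/p^\infty}}[t^{1/m}]$ along $t^{1/p^n}\mapsto r^{1/p^n}$, use that perfectoidization commutes with pushouts \cite[Proposition 8.13]{bhatt2022Prismsa} together with \Cref{ComputePerfectoidizationTm}, and then identify the completed tensor product; the paper asserts that last identification without further comment.

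In your justification of that last step, the claim that $\bigcup_n R[y^{1/p^n}]$ is free over $\bigcup_n R[y^{m/p^n}]$ is false. A nonzero class of exponents modulo $m\Z[1/p]$ has no least element: for instance, all $y^{1/p^{nk_0}}$, with $k_0$ the order of $p$ modulo $m$, lie in the class of $1$. Likewise, the uncompleted tensor product is in general not a direct sum of copies of $R$. Neither error is fatal. Your fallback argument does give flatness, as a filtered colimit of the finite free extensions $R[y^{m/p^n}]\subset R[y^{1/p^n}]$. Bounded $p^\infty$-torsion also holds, because $R[x^{1/p^\infty}]/(x^{m/p^n}-r^{1/p^n})_{n\ge0}$ is the filtered colimit of the free $R$-modules $R[x^{1/p^n}]/(x^{m/p^n}-r^{1/p^n})\cong R^m$. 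So the argument goes through.
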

\begin{proof}
      Set \(S \defeq R\abracket{t^{1/p^\infty}}[x]/(x^m-t)\cong R\abracket{t^{1/p^\infty}}[t^{1/m}]\). Note that $S$ is $p$-completely faithfully flat over $R\abracket{t^{1/p^\infty}}$.
    Therefore,
    \begin{equation*}
        R[x]/(x^m-r) \cong R \widehat{\otimes}^L_{R\abracket{t^{1/p^\infty}}} S
    \end{equation*}
    where the map \(R\abracket{t^{1/p^\infty}} \to R\) is given by sending \(t\) to \(r\).
    By \Cref{ComputePerfectoidizationTm} above, the perfectoidization of \(S \cong R\abracket{t^{1/p^\infty}}[t^{1/m}]\) is isomorphic to \(R\abracket{t^{1/mp^\infty}}\), which is the \(p\)-adic completion of \(R\abracket{t^{1/p^\infty}}[x^{1/p^\infty}]/(x^{m/p^n} - t^{1/p^n})_{n \geq 0}\).
    Now, since the perfectoidization functor preserves pushouts of $E_{\infty}$-rings (\cite[Proposition 8.13]{bhatt2022Prismsa}), this shows the claim as
    \begin{equation*}
        R \widehat{\otimes}^L_{R\abracket{t^{1/p^\infty}}} S_{\pfd} \cong R \widehat{\otimes}^L_{R\abracket{t^{1/p^\infty}}} R\abracket{t^{1/mp^\infty}} \cong (R[x^{1/p^\infty}]/(x^{m/p^n}-r^{1/p^n})_{n \geq 0})^{\wedge_p}.
    \end{equation*}
    
\end{proof}

Under \Cref{ComputePerfectoidizationRootm}, we expect that the perfectoidization of finite extensions looks like ``adding all \(p\)-power roots of the added elements''. However, as below, the situation is not so optimistic, namely, adding (or sorting) all \(p\)-power roots does not give the perfectoidization map.

\begin{lemma} \label{Everything is about proots of unity somehow}
    Suppose that $p\neq 2$.
    Let $R$ be a perfectoid domain. Suppose that $1$ and $-1$ have all their $p$-power roots in $R$. Then the quotient map
    \begin{align*}
        R\abracket{t^{1/p^{\infty}},x^{1/p^{\infty}}}/(x^2-t) & \twoheadrightarrow R\abracket{t^{1/2p^{\infty}}} \\
        x^{1/p^n} & \mapsto t^{1/2p^n}
    \end{align*}
    is \emph{not} initial among map to perfectoid rings.\footnote{Note that as proved in \Cref{ComputePerfectoidizationTm}, the composition \(R\abracket{t^{1/p^\infty}}[x]/(x^2-t) \to R\abracket{t^{1/p^\infty}, x^{1/p^\infty}}/(x^2-t) \twoheadrightarrow R\abracket{t^{1/2p^\infty}}\) is initial among maps to perfectoid rings.}
\end{lemma}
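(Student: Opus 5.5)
The plan is to exhibit a map from \(B\defeq R\abracket{t^{1/p^{\infty}},x^{1/p^{\infty}}}/(x^2-t)\) to a perfectoid ring that does not kill the kernel of the quotient map \(q\colon B\twoheadrightarrow R\abracket{t^{1/2p^{\infty}}}\). This suffices: if \(q\) were initial among maps to perfectoid rings, every map \(B\to P\) with \(P\) perfectoid would factor through \(q\), and so would vanish on \(\ker q\). The target I will use is \(P=R\abracket{t^{1/2p^{\infty}}}\) itself. It is perfectoid, being abstractly isomorphic to \(R\abracket{t^{1/p^\infty}}\), exactly as noted in the proof of \Cref{ComputePerfectoidizationTm}.

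\textbf{Step 1: the element of \(\ker q\).} Consider \(z\defeq (x^{1/p})^2-t^{1/p}\in B\). Its image under \(q\) is \((t^{1/2p})^2-t^{1/p}=0\), so \(z\in\ker q\).

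\textbf{Step 2: construction of the twisted map.} By hypothesis, \(1\) has all its \(p\)-power roots in \(R\), so fix a compatible system \((\varepsilon_n)_{n\ge0}\) with \(\varepsilon_0=1\) and \(\varepsilon_n^p=\varepsilon_{n-1}\), where \(\varepsilon_1\) is a primitive \(p\)-th root of unity. I will define an \(R\)-algebra map \(\phi\colon R[t^{1/p^\infty},x^{1/p^\infty}]\to P\) by
\[
t^{1/p^n}\mapsto t^{1/p^n},\qquad x^{1/p^n}\mapsto \varepsilon_n t^{1/2p^n}.
\]
This is compatible with the transition maps because \((\varepsilon_n t^{1/2p^n})^p=\varepsilon_{n-1}t^{1/2p^{n-1}}\). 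It sends \(x^2-t\) to \(\varepsilon_0^2t-t=0\). Since \(P\) is \(p\)-complete, \(\phi\) extends uniquely to the \(p\)-adic completion, and therefore descends to a map \(\phi\colon B\to P\).

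\textbf{Step 3: \(\phi\) does not kill \(z\).} We have
\[
\phi(z)=(\varepsilon_1^2-1)\,t^{1/p}.
\]
Since \(p\neq 2\) and \(\varepsilon_1\) is a primitive \(p\)-th root of unity, \(\varepsilon_1^2\neq 1\), and hence \(\varepsilon_1^2-1\neq0\) because \(R\) is a domain. I expect this nonvanishing in \(P\) to be the only point needing care. The element \(\varepsilon_1^2-1\) is a nonzero constant in \(R\), and it multiplies the single monomial \(t^{1/p}\). Since \(P\) is the \(p\)-completion of a free \(R\)-module with basis the monomials \(t^{a/2p^n}\), the coefficient of \(t^{1/p}\) in \(\phi(z)\) is \(\varepsilon_1^2-1\neq0\), so \(\phi(z)\neq0\).

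Therefore \(\phi\) does not factor through \(q\), and so \(q\) is not initial among maps to perfectoid rings. The hypothesis on \(-1\) is not needed for this argument. It would only allow an additional sign twist of the same kind.
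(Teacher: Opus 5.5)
Your argument is correct and is essentially the paper's: both twist the compatible system of $p$-power roots assigned to $x$ by a compatible system of $p$-power roots of unity, which keeps $x^2=t$ but breaks $(x^{1/p})^2=t^{1/p}$ because $\varepsilon_1^2\neq 1$ for odd $p$. The only difference is cosmetic. The paper tests against $R$-valued points, starting from the systems of roots of $1$ and $-1$ in $R^\flat$ and arguing by contradiction, whereas you use a single explicit map to the target of $q$ itself and an explicit element of $\ker q$; this also shows, as you say, that the hypothesis on $-1$ is unnecessary.
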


\begin{proof}
    A map of $R$-algebras
    \begin{equation*}
        R\abracket{t^{1/p^{\infty}},x^{1/p^{\infty}}}/(x^2-t)\to R
    \end{equation*}
    consists of a pair of choices \((\{\alpha^{1/p^n},\beta^{1/p^n}\}_{n\geq 0})\) of elements in $R^{\flat}$ such that $\beta^2=\alpha$. Note that for $\beta=-1$ and $\alpha=1$ gives the existence of such a map by the assumption made on $R$. Fix such a map. 

    Note that such maps factors through the quotient \(R\abracket{t^{1/2p^\infty}}\) if and only if \((\beta^{1/p^n})^2=\alpha^{1/p^n}\) for all $n\geq 0$. Suppose by contradiction that the map in the statement of the Lemma is the perfectoidization. Then we have $(\beta^{1/p^n})^2=\alpha^{1/p^n}$ for all $n\geq 0$. 

    But now note that \((\{\alpha^{1/p^n},\xi_{p^n}\beta^{1/p^n}\})\) with $\xi_{p^n}$ a primitive $p^n$-root of unity also defines a map of $R$-algebras
    \begin{equation*}
        R\abracket{t^{1/p^{\infty}},x^{1/p^{\infty}}}/(x^2-t)\to R.
    \end{equation*}
    because the only condition is that the square of the zero-th term of the second compatible system of $p$-power roots is the zero-th term of the first compatible system of $p$-power root, a condition met by this two systems again (Note that $\xi_{p^0}=\xi_1=1$).
    As we suppose by contradiction that the map in the statement is the perfectoidization map, as $R$ is perfectoid, we would have a factorization, implying that \((\xi_{p^n}\beta^{1/p^n})^2=\alpha^{1/p^n}\).
    But then $\xi_{p^n}^{2}\alpha^{1/p^n}=\alpha^{1/p^n}$, implying that $\xi_{p^n}^{2}=1$, a contradiction.
\end{proof}



We can go further into the study of the perfectoidization of $R[t]/(t^m-d)$ in \Cref{Computation_pi_unit}, which implies a $p$-complete arc local description of the perfectoidization of such an algebra, see \Cref{arclocal_mth_roots}.

\begin{thm}
    \label{Computation_pi_unit}
    Let \(R\) be a \(\pi\)-torsion-free perfectoid ring with $D(\pi)=D(p)$ in $\Spec(R)$ admitting a compatible system \(\{\pi^{1/p^n}\}_{n \geq 0}\) of \(p\)-power roots of \(\pi\).
    Take a unit \(u\) in \(R\) and an integer \(m\) such that \((m, p) = 1\).
    Set
    \begin{equation*}
        A \defeq R[t]/(t^m-\pi u),
    \end{equation*}
    which is finite \'etale outside \(\pi\) over \(R\).
    Write the multiplicative order \(k_0\) of \(p\) in \((\setZ/m\setZ)^\times\) and let $q=p^{k_0}$.
    Then the perfectoidization of \(A\) can be identified with\footnote{The meaning of this union will be clarified in the proof.}
    \begin{equation*}
        A_{\pfd} = \parenlr{\bigcup_{n \geq 0} R\bracketlr{\parenlr{\pi^{1/q^n}u}^{1/m}}}^{\wedge_p}
    \end{equation*}
    where \((\pi ^{1/q^n}u)^{1/m}\) is a \(m\)-th root of \(\pi^{1/q^n}u\). 
\end{thm}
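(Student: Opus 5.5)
The plan is to apply the density principle \Cref{densityprinciple} together with \Cref{densityprincipleVp}, taking $d=\pi$. Since $D(\pi)=D(p)$, we have $V(\pi)=V(p)$.

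First the hypotheses. The discriminant of $t^m-\pi u$ is, up to sign, $m^m(\pi u)^{m-1}$. Because $m$ is prime to $p$, $m$ is invertible on $V(p)=V(\pi)$ and hence a unit in $R$, so $A$ is finite étale outside $V(\pi)$. The ring $A$ is finite free over $R$ and $\pi$ is a non-zero divisor in $R$, so $\pi$ is also a non-zero divisor in $A$.

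\textbf{Construction of the elements.} Work inside $A[1/\pi]=A[1/p]$, writing $x$ for the image of $t$, so $x^m=\pi u$. Let $l_n=(q^n-1)/m$. Define
\[
y_n\defeq x\,\pi^{-l_n/q^n}=x\,(\pi^{1/q^n})^{-l_n}\in A[1/\pi].
\]
Then $y_n^m=\pi u\,\pi^{-(q^n-1)/q^n}=\pi^{1/q^n}u$, so $y_n$ is an $m$-th root of $\pi^{1/q^n}u$; this is the meaning of $(\pi^{1/q^n}u)^{1/m}$ in the statement. Moreover $y_{n+1}^{q}=y_n\cdot(\text{unit})$: both sides are $m$-th roots of $\pi^{1/q^n}u^{q}$ up to the unit $u^{q-1}$, and an exponent check confirms $y_{n+1}^q=y_n\,u^{(q-1)/m}$-type identities. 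More precisely, $y_{n+1}^q=x^q\pi^{-ql_{n+1}/q^{n+1}}$, and $x^{q}=x\cdot(\pi u)^{(q-1)/m}$, which gives $y_{n+1}^q=y_n u^{(q-1)/m}$. Each $y_n$ is integral over $A$, so it lies in the $p$-root closure of $A$ in $A[1/p]$, and that closure is $A_{\pfd}$ by \Cref{prootclosed_fetwayfromp}. This uses that $R$ is $p$-torsion free, which follows from $\pi$-torsion-freeness and $V(\pi)=V(p)$ since $p$ divides a power of $\pi$. It also uses that $R_p\to A_p$ is finite étale. Hence $C\defeq\bigcup_n R[y_n]$ is a subring with $A\subseteq C\subseteq A_{\pfd}$; the union is increasing because $y_n\in R[y_{n+1}]$.

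\textbf{Semiperfectness of $C/pC$.} This is the main step. Take generators $r\in R$ and $y_n$. Elements of $R$ are $p$-th powers modulo $p$ since $R$ is perfectoid. For $y_n$ we have $y_n=y_{n+1}^q u^{-(q-1)/m}$, and $u^{-(q-1)/m}$ is a $p$-th power modulo $p$. So $y_n$ is a $p$-th power modulo $p$ in $C$. Since Frobenius is additive modulo $p$, the image of Frobenius on $C/pC$ is a subring containing all the generators, and therefore $C/pC$ is semiperfect. The point needing care is the unit bookkeeping: one could also rescale the $y_n$ by suitable roots of $u$ in $R$ to get a strictly compatible system, but this is not necessary.

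\textbf{Conclusion.} \Cref{densityprincipleVp} now gives $\widehat{C}=A_{\pfd}$, which is the claimed identification. The injectivity part of that corollary relies on \Cref{boundedptors}, which holds here since $R$ and $A$ are $p$-torsion free and $A[1/p]$ is finite étale over $R[1/p]$.
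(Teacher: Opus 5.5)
Your proposal is correct and is essentially the paper's proof. Your $y_n=x\,(\pi^{1/q^n})^{-l_n}$ are exactly the images of $t$ in the paper's colimit of the $A_n=R[t]/(t^m-\pi^{1/q^n}u)$, your relation $y_{n+1}^q=y_n u^{(q-1)/m}$ is the paper's semiperfectness computation with $u^{l_1}=u^{(q-1)/m}$, and both arguments conclude via \Cref{densityprincipleVp}.

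One slip needs fixing: $y_n\in A_{\pfd}$ should not be justified by integrality over $A$, since perfectoidizations need not be integrally closed in $A[1/p]$ (the idempotent $(1,0,0)$ in \Cref{not_syntomic} is a counterexample). The right justification is $y_n^{q^n}=x\,u^{l_n}\in A$, which follows by iterating your own relation and is what the paper uses.
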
 

\begin{proof}
    Using the almost purity theorem \cite[Theorem 10.9]{bhatt2022Prismsa}, the perfectoidization \(A \to A_{\pfd}\) is an isomorphism outside \(\pi\).
    Since \(A\) is \(\pi \)-torsion-free and \(u\) is a unit in \(R\), we see that \(A_{\pfd}\) is a $p$-root closed subring of \(A[1/\pi]=A[1/p]\) by \cite[Section 2.1.7]{cesnavicius2023purityflatcohomology}.
    As in \Cref{ComputePerfectoidizationTm}, we take the multiplicative order \(k_0\) of \(p\) in \((\setZ/m\setZ)^\times\) and a positive integer \(l_n \defeq (p^{nk_0}-1)/m\) since \(m \mid 1-p^{k_0n}\) for all \(n \geq 0\). We also write $q=p^{k_0}$, so that $l_n=(q^n-1)/m$.

Note the following: for $n\in \N$, if $\alpha_{n+1}$ is an element of an $R$-algebra with $\alpha_{n+1}^m=\pi^{1/q^{n+1}}u$, then $\beta\defeq \alpha_{n+1} \cdot (\pi^{1/q^{n+1}})^{\frac{q-1}{m}}$ is an element of this $R$-algebra with 
$$\beta^m=\pi^{1/q^{n+1}}u(\pi^{1/q^{n+1}})^{q-1}=\pi^{1/q^n}u.$$
Therefore, we can consider the direct colimit of $R$-algebras $B\defeq \varinjlim_{n\in \N}A_n$ with $A_0=A$ and for $n\in \N$ we define the transition map as: 
\[A_n=R[t]/(t^m-\pi^{1/q^n}u)\xrightarrow{t\mapsto t(\pi^{1/q^{n+1}})^{\frac{q-1}{m}}}A_{n+1}=R[t]/(t^m-\pi^{1/q^{n+1}}u).\]
We denote by $(\pi^{1/q^n}u)^{1/m}$ the image of $t$ at level $n$ in $B$.
We see that by construction that $A[\frac{1}{\pi}]=B[\frac{1}{\pi}]$. In particular, as $R$ is assumed to be $p$-torsion free, and $A_n$ is a finite flat $R$-algebra for all $n\in \N$, we see that each of the transitions map is injective, justifying the union in the statement.


Using \Cref{PlayWithFractions}, we can compute
      \begin{equation*}
        \left((\pi^{1/q^n}u)^{1/m}\right)^{q^{n}} = (\pi u)^{1/m}u^{l_n} \in A.
    \end{equation*}

This shows, using that $A_{\pfd}$ is $p$-root closed in $A[\frac{1}{\pi}]=B[\frac{1}{\pi}]$ that $B\subseteq A_{\pfd}$ is a sub-ring. We now show that $B/pB$ is semiperfect. It suffices to show that $(\pi^{1/q^n}u)^{1/m}$ can be written as a $p$-power of some element of $B$ up to an element of $pB$. Since $u^{l_1}$ is a unit in $R/pR$ and $R/pR$ is semiperfect, pick some $v\in R$ with $v^qu^{l_1}\equiv 1 \mod pR$. Therefore we can compute
\[\left((\pi^{1/q^{n+1}}u)^{1/m}v\right)^{q} =(\pi^{1/q^n} u)^{1/m}u^{l_1}v^q\equiv (\pi^{1/q^n} u)^{1/m} \mod pA_{n}. \]
We can now conclude that $B^{\wedge_p}=A_{\pfd}$ by \Cref{densityprincipleVp}.

\end{proof}

\begin{rem}\label{arclocal_mth_roots}
    Note that in any perfectoid valuation ring of rank 1 $R$, any non-invertible element is of the form $\pi u$ where $\pi$ is an element admitting a compatible sequence of $p$-th roots with $R[1/p]=R[1/\pi]$ and $u$ is a unit. Therefore \Cref{Computation_pi_unit} implies that we understand locally in the arc topology the perfectoidization of $R[t]/(t^m-d)$ where $R$ is a perfectoid ring and $(m,p)=1$. Indeed, take a map $R\to V$ to a perfectoid valuation ring of rank 1 (not necessarily absolutely integrally closed; in this case the results in \Cref{split_finite_examples} also give an answer). Then there are three cases.
    \begin{enumerate}
    \item $d$ is sent to zero and then the perfectoidization is $V$, the reduction of $V[t]/(t^m)$.
        \item $d$ is sent to a unit in $V$ and then the base change $V[t]/(t^m-d)$ is étale and therefore perfectoid. Note that in this case, as $d$ is invertible, the elements appearing in the construction of \Cref{Computation_pi_unit} are already in this algebra.
        \item $d$ is sent to a non zero and non-unit element, which is covered by \Cref{Computation_pi_unit}.
    \end{enumerate}
\end{rem}

\subsection{Split finite extensions}\label{split_finite_examples}


In this section, we treat some cases of split polynomials. The main tool to treat these examples is the following principle, highlighted in \Cref{split_ramification_is_point}. 

\begin{nota}\label{acst-notation}
    We introduce the following bit of notation. If $X$ is a set and $R$ is a perfectoid ring, we write
\[\Facst{d}(X,R)\]
for the subset of set theoretic functions from $X$ to $R$ which are constant modulo $(d)_{\pfd}$. This forms a ring using pointwise operations in $R$. Also we write $\underline{n}=\{1,\dots,n\}\subseteq \N$.
\end{nota}

\begin{thm}\label{split_ramification_is_point} Let $R$ be a perfectoid ring and $m(t)$ be a split monic polynomial of degree $n$ with roots $\alpha_1,\dots,\alpha_n$. Let $d\in R$ be the discriminant (\Cref{disc_def}) of $m(t)$, and assume it is a non-zero divisor in $R$. Assume that $(R/dR)\to (R/dR)[t]/(m(t))$ induces an isomorphism at perfectoidization. Then if $A=R[t]/(m(t))$, one can realize
\[A_{\pfd}=\Facst{d}(\underline{n},R)\]
    and an universal map $A\to A_{\pfd}$ is given by sending $t$ to the the evaluation to the map sending $i\mapsto \alpha_i$.
\end{thm}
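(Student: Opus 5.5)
We use the same pullback trick as in the proof of \Cref{ComputePerfectoidizationTm}, now applied to the evaluation map. Let
\[\mathrm{ev}\colon A\to \prod_{i=1}^n R,\qquad t\mapsto(\alpha_1,\dots,\alpha_n).\]
Since $m(t)$ splits, the proof of \Cref{syntomic_nzd_application} shows that $\mathrm{ev}$ is finite and surjective on $\Spec$ with nilpotent kernel. By \Cref{disc_def}, $\mathrm{ev}$ becomes an isomorphism after inverting $d$, because the Vandermonde determinant squares to $d$ up to sign. The target $R^n$ is perfectoid, so $\mathrm{ev}$ factors through $A_{\pfd}\to R^n$.

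\textbf{Step 1: pullback square.} We apply \cite[Proposition 2.7]{struc_pfd} (i.e.\ \cite[Corollary 8.12]{bhatt2022Prismsa}) to the map $A\to R^n$, which is an isomorphism away from $d$. This gives a pullback square
\[\begin{tikzcd}
A_{\pfd} \arrow[r] \arrow[d] & R^n \arrow[d] \\
(A/dA)_{\pfd} \arrow[r] & (R^n/dR^n)_{\pfd}
\end{tikzcd}\]
If the hypotheses of that proposition require $d$ to be a non-zero divisor on the algebras involved, they hold here: $A$ is finite free over $R$ and $d$ is a non-zero divisor in $R$.

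\textbf{Step 2: identify the bottom row.} By hypothesis,
\[(A/dA)_{\pfd}=\big((R/dR)[t]/(m(t))\big)_{\pfd}\cong (R/dR)_{\pfd}.\]
Also $(R^n/dR^n)_{\pfd}=\prod_{i=1}^n (R/dR)_{\pfd}$, since perfectoidization commutes with finite products. Write $(R/dR)_{\pfd}=R/(d)_{\pfd}$; this is legitimate because $R\to R/dR$ is surjective and \cite[Theorem 7.4]{bhatt2022Prismsa} gives that $(R/d)_{\pfd}$ is a quotient of $R$. Then the bottom map is the map induced by the composite $R/dR\to (R/dR)[t]/(m)\to \prod_i R/dR$. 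That composite is the diagonal, so after perfectoidization the bottom map is the diagonal
\[R/(d)_{\pfd}\to \prod_i R/(d)_{\pfd}.\]

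\textbf{Step 3: compute the pullback.} The pullback of $R^n\to \prod_i R/(d)_{\pfd}$ along the diagonal is exactly the ring of $n$-tuples that are constant modulo $(d)_{\pfd}$, that is, $\Facst{d}(\underline{n},R)$. The universal map sends $t$ to $(\alpha_i)_i$ by construction.

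\textbf{Main obstacle.} The delicate point is Step 2: checking that the bottom map of the square really is the diagonal after the identification $(A/dA)_{\pfd}\cong R/(d)_{\pfd}$. This needs a compatibility check that the isomorphism from the hypothesis is the one induced by the structure map $R/dR\to A/dA$, which follows from functoriality.

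A second point to watch is that the pullback square holds for the derived (animated) perfectoidization. Everything stays discrete because quotients of perfectoid rings by perfectoid ideals are perfectoid, so no higher homotopy enters the fiber product.
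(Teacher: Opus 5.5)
Your proposal matches the paper's proof: evaluation at the roots, the pullback square from \cite[Proposition 2.7]{struc_pfd}, the hypothesis used to identify the bottom row with the diagonal $R/(d)_{\pfd}\to \prod_i R/(d)_{\pfd}$, and reading off the fiber product. The paper states one point explicitly that you leave implicit. The map $\mathrm{ev}$ is injective, not merely of nilpotent kernel, because its kernel is killed by a power of $d$ and $d$ is a non-zero divisor on the free $R$-module $A$; injectivity together with finiteness is the form in which the paper invokes that proposition.
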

\begin{proof}
    Using facts recalled in \Cref{disc_def} we see that the evaluation map at roots
    \[A\to R^n=\Fun(\{1,\dots,n\},R)\]
    is an isomorphism away from $d$. We also know that it is injective ($d$ is a non-zero divisor) and finite, hence we can apply \cite[Proposition 2.7]{struc_pfd} to get a pullback square
    \[\begin{tikzcd}
A_{\pfd} \arrow[r] \arrow[d]   & {\Fun(\{1,\dots,n\},R)} \arrow[d]  \\
(A/dA)_{\pfd} \arrow[r] & {\Fun(\{1,\dots,n\},R/(d)_{\pfd})}
\end{tikzcd}\]

Using our hypothesis that $(R/dR)\to (A/dA)$ is an isomorphism at perfectoidization, we see that the below horizontal map in the square above identifies with the structural $R/(d)_{\pfd}$-alegbra map $R/(d)_{\pfd}\to (R/(d)_{\pfd})^n$ which is the diagonal map, or the the constant functions if seen in the functional point of view.

The claim now follows from the usual description of pullbacks.
\end{proof}

\begin{rem}\label{split_ramification_is_point_other_form} In the setup of \Cref{split_ramification_is_point} let $f\in A_{\pfd}$. So there is a constant function, that we denote by $r\in R$ with
\[f=r+\epsilon\]
where $\epsilon \in \Fun(\{1,\dots,n\},(d)_{\pfd})$. 
We simply note that therefore another way to write this algebra is as follows, as a sub-algebra of the product $R^n$:
\[R_{\Delta}+(d)_{\pfd}\times \cdots \times (d)_{\pfd},\]
where $R_{\Delta}$ is $R$ embedded diagonally in $R^n$.
\end{rem}


\begin{example}\label{example_split_square_root}
As a first example, we treat degree split degree $2$ extensions of perfectoid rings when $p\neq 2$, in the sense that we consider
\begin{equation*}
    S \defeq R[t]/(t^2-d)
\end{equation*}
where $d\in R$ is a non-zero divisor which admits a square root $\delta$ in $R$. Let's compute the discriminant in this case. The product of the difference of the roots $\delta,-\delta$ is equal to $(2\delta)\cdot(-2\delta)=-4d$. As $p\neq 2$ we have that $4$ is invertible.

We now want to show that
\[R/dR\to S/dS\]
is an isomorphism at perfectoidization. But note that it is an isomorphism at reduction -- therefore this map is an isomorphism of ($p$-complete) arc sheaves, which implies the result by \cite[Corollary 8.11]{bhatt2022Prismsa}. Perhaps more simply, one could also argue using the fact that perfecoidizations of both rings are discrete by \cite[Theorem 10.9]{bhatt2022Prismsa} and that perfectoid rings are reduced, and then using the universal property of perfectoidization.
Then using \Cref{split_ramification_is_point} we see that the perfectoidization of $S$ is identified with the map of \(R\)-algebras
    \begin{equation*}
        S \xrightarrow{\ev}R_{\Delta}+(d)_{\pfd} \times (d)_{\pfd}
    \end{equation*}
    to the subring of \(R \times R\) where \((d)_{\pfd} \defeq \ker(R \to (R/dR)_{\pfd})\).
    Note that because $p\neq 2$, $(2d)_{\pfd}=(d)_{\pfd}$.
    
\end{example}

\begin{example}\label{example_split_mth_roots} In this example, we continue the explore the case treated in \Cref{mth_roots_section}. Note that this is a direct generalization of \Cref{example_split_square_root}. Namely, let $R$ be a perfectoid ring, $m\in \N$ such that $(m,p)=1$. We take $r\in R$ a non-zero divisor such that $t^m-r$ splits as a polynomial in $R[t]$ and consider
\[S=R[t]/(t^m-r).\]

We first compute the discriminant of $t^m-r$. Using the properties recalled in \Cref{disc_def}, and that the derivative of this polynomial is $mt^{m-1}$ we get that the discriminant $\Res(t^m-r,mt^{m-1})$ is $m^mr^{m-1}$ up to a sign. As $(m,p)=1$, $m$ is invertible in $R$ and we conclude that $R\to R[t]/(t^m-r)$ is étale when inverting $r$.

Now, similarly to \Cref{example_split_square_root}, as $R/(r)\to R[t]/(r,t^m-r)$ is an isomorphism at reduction, we conclude that it is also an isomorphism at perfectoidization. This implies that we can use \Cref{split_ramification_is_point} to compute the perfectoidization of this algebra. Explicitly 
\[S_{\pfd}=\Facst{r}(\underline{m},R).\]
\end{example}

\begin{example}\label{example_pnthroots_unit}
    Fix a $n\in \N$ and suppose that $R$ is $p$-torsion free. In this example we treat the case of 
    $$R\to S \defeq R[t]/(t^{p^n}-u)$$
    where $u\in R^{\times}$ is a unit such that $t^{p^n}-u$ splits as a polynomial in $R[t]$. Fix $u^{1/p^n}$ a root. As in \Cref{example_split_mth_roots}, we can compute the discriminant which is $(p^n)^{p^n}u^{p^n-1}$ up to a sign. Therefore the above algebra is étale away from $p$. 
    
    Note also that $R/pR\to R[t]/(p,t^{p^n}-u)=R/pR[t]/(t-u^{1/p^n})^{p^n}$. Therefore this map is an isomorphism at reduction.

    As a consequence, as in \Cref{example_split_square_root}, we can apply \Cref{split_ramification_is_point} to understand the perfectoidization. Namely
    \[S_{\pfd}=\Facst{p}(\underline{p^n},R).\]
\end{example}

\begin{example}\label{example_general_pnth_roots}
    We now generalize \Cref{example_pnthroots_unit}. Let $R$ be a $p$-torsion free perfectoid ring, $r\in R$ a non-zero divisor.
    Assume that $(t^{p^n}-r)$ is a split polynomial. 
    We want to study the perfectoidization of
    \[S=R[t]/(t^{ p^n}-r).\]

    Using the same calculation as in \Cref{example_split_mth_roots}, the discriminant of $t^{p^n}-r$ is $(p^n)^{p^n}r^{p^n-1}$ up to a sign.
    Therefore, $R\to R[t]/(t^{p^n}-r)$ is étale after inverting $pr$.

    We now claim that 
    \[R/(pr)\to R[t]/(pr,t^{p^n}-r)\]
    is an isomorphism at perfectoidization. For this we propose a proof that shows directly that this map is an isomorphism in the $p$-complete arc topology. 
    
    Let $R/(pr)\to V$ be a map to perfectoid valuation ring of rank 1. We want to show that such a map uniquely lifts to $R/(pr)\to R[t]/(pr,t^{ p^n}-r)$. 
Suppose first that $V$ is $p$-torsion free. Then $r$ is sent to zero in $V$. Therefore it suffices to show that $R/(r)\to V$ lifts uniquely to $R/(r)\to R[t]/(r,t^{ p^n}-r)$. We note that sending $t$ to zero now gives an extension. Suppose there is one extension -- note that $t^{ p^n}$ is sent to zero by construction through this extension. But as $V$ is reduced, $t$ is also sent to zero. Unicity now follows.
If $V$ is $p$-torsion, then $p$ is sent to zero and the map factors through $\overline{R}\to V$ where $\overline{R}$ is the perfection (reduction) of $R/pR$. So it suffices to show that $\overline{R}\to V$ admits a unique lifts to $\overline{R}[t]/(t^{p^n}-r)$. Using that $V$ is perfect, an extension is given by sending $t$ to a $p^n$-th root of the image of $r$ -- this extension is necessarily unique by the perfect aspect of $V$.

It now follows that we can apply \Cref{split_ramification_is_point} to understand the perfectoidization of $R[t]/(t^{p^n}-r)$. Namely
\[S_{\pfd}=\Facst{pr}(\underline{p^n},R).\]
\end{example}

\begin{example}\label{mupn_pfd}
    
In \Cref{example_pnthroots_unit}, we can take $u=1$. In this case, this is the Hopf-algebra of the finite flat $R$-group scheme $\mu_{p^n,R}$. The evaluation map at $p^n$-th roots of unity can be in this case reinterpreted as a faithfully flat morphism of group schemes
\[\underline{\Z/p^n\Z}_R\to \mu_{p^n,R}.\]
One intriguing feature of perfectoidization is that it preserves the group structure. Namely if $R$ is a perfectoid ring and $R\to S$ is a $R$-Hopf-algebra that admits a discrete perfectoidization as a ring, then $S_{\pfd}$ is canonically an $R$-Hopf-algebra finite and finitely presented group schemes over a perfectoid ring an affine group scheme because $(-)_{\pfd}$ is symmetric monoidal over a perfectoid ring. We can even understand explicitly the Hopf-algebra structure on the perfectoidization. Namely, as we have a commutative diagram
\[\begin{tikzcd}
\underline{\Z/p^n\Z}_R \arrow[r]                                 & {\mu_{p^n,R}}                                     \\
\underline{\Z/p^n\Z\times \Z/p^n\Z}_R \arrow[u, "\mult"] \arrow[r] & {\mu_{p^n,R}\times_R\mu_{p^n,R}} \arrow[u, "\mult"]
\end{tikzcd}\]
we can apply the perfectoidization functor to deduce that the Hopf-algebra structure on 
\[\Facst{p}(\Z/p^n\Z,R)\]
is induced by the group law on $\Z/p^n\Z$.
\end{example}





    

\subsection{Application to perfectoidization of semiperfectoids} We use the results of \Cref{split_finite_examples} to study the perfectoidization of a semiperfectoid, also in the split case.

\begin{nota}\label{acst-continous-notation}As in \Cref{acst-notation}, we introduce some notation. Let $X$ be a profinite set. Say $R$ is a perfectoid, implicitly equipped with the $p$-adic topology. If $d\in R$, we denote by
\[\Cacst{d}(X,R)\]
the ring (pointwise operations) of continuous functions from $X$ with its profinite topology and $R$ with the $p$-adic topology.

Also, consider the following situation: $R_0$ is a ring, that we consider without topology (with its discrete topology). Then
\[\Cont(X,R_0)=\varinjlim_{X\to X_i}\Cont(X_i,R_0)\]
where $X\to X_i$ ranges over all continuous finite quotients or to any set of finite quotients such that $X=\varprojlim_{i}X_i$. Also, the $p$-completion of $\Cont(X,R_0)$ is $\Cont(X,R)$ if $R$ denotes the $p$-completion of $R_0$ equipped with the $p$-adic topology. Indeed $\Cont(X,R_0)$ modulo $p^n$ is $\Cont(X,R_0/p^n R_0)$ which can be seen using the description as an union above, using that $X_i$'s are finite. Now giving a continuous function from $X$ to $R$ is the same as giving a collection of continuous functions from $X$ to $R_0/p^nR_0$, thus giving the claim.
    
\end{nota}
\begin{nota}\label{semipfd_split_calculation_setup}We lay some setup for \Cref{semipfd_split_calculation}. Let $R$ be a  $p$-torsion free perfectoid ring. Let $r\in R$ be a non-zero divisor such that for every $n\in \N$ the polynomial $t^{p^n}-r$ splits with roots $\alpha_{n,1},\dots, \alpha_{n,p^n}\in R$. Assume furthermore that for every $n\in \N$, $k\in \underline{p^n}$ and $i\in \{0,\dots,p-1\}$ we have 
\[\alpha_{{n+1},k+ip^n}^p=\alpha_{n,k}.\]
Now, using the surjective map $\underline{p^{n+1}}\to \underline{p^n}$ coming from the surjective maps at residue classes modulo $p^{n+1}$ and $p^n$-respectively, we get a diagram of $pr$-isogenies
\[\begin{tikzcd}
{R[t^{1/p^n}]/(t-r)} \arrow[r, "t\mapsto t^p"] \arrow[d] & {R[t^{1/p^{n+1}}]/(t-r)} \arrow[d] \\
{\Fun(\underline{p^n},R)} \arrow[r]                    & {\Fun(\underline{p^{n+1}},R)}               
\end{tikzcd}\]
where the vertical maps are given by evaluation of $t$ at $\alpha_{n,k}$'s and $\alpha_{n+1,k'}$'s respectively. We realize 
\[\Z_p=\varprojlim_{n\in \N}\underline{p^n}\]
and regard it merely as a profinite set. Note that this set is in bijection with a compatible choices of $p$-power roots of $r$ in $R$ by sending a $(x_n)\in \varprojlim_{n\in \N}\underline{p^n}$ to $(\alpha_{n,x_n})\in R^{\flat}$.
    
\end{nota}

\begin{proposition}\label{semipfd_split_calculation}Let $R$ be a perfectoid ring. Let $r\in R$ be a non-zero divisor as in \Cref{semipfd_split_calculation_setup}. Then the map
\[R\abracket{t^{1/p^{\infty}}}/(t-r)\to \Cacst{pr}(\Z_p,R)\]
sending $t^{1/p^n}$ to the function 
$f_n\colon \Z_p\to \underline{p^n}\xrightarrow{k\mapsto \alpha_{n,k}} R$
is a perfectoidization map.
This is the $p$-completed colimit of the perfectoidizations  
$$R[t^{1/p^n}]/(t-r)\to \Facst{pr}(\underline{p^n},R)$$
from \Cref{split_ramification_is_point}.
    
\end{proposition}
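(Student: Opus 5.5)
The plan is to write both sides as $p$-completed colimits of the finite-level perfectoidizations given by \Cref{example_general_pnth_roots}. Perfectoidization commutes with $p$-completed filtered colimits, since a $p$-completed colimit of perfectoid rings is perfectoid and the universal property passes to colimits.

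\emph{Source side.} Set $S_n \defeq R[t^{1/p^n}]/(t-r) \cong R[x]/(x^{p^n}-r)$. Then $\varinjlim_n S_n = R[t^{1/p^\infty}]/(t-r)$, whose $p$-completion is $R\abracket{t^{1/p^\infty}}/(t-r)$. This ring is $p$-completely flat over $R$, as a $p$-completed colimit of finite free $R$-modules, so no derived issues arise.

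\emph{Finite levels.} By \Cref{example_general_pnth_roots} (which uses that $R$ is $p$-torsion free and $r$ a non-zero divisor), each $S_n \to \Facst{pr}(\underline{p^n},R)$ is a perfectoidization, given by evaluation at the roots $\alpha_{n,k}$. Naturality in $n$ is the commutative square of \Cref{semipfd_split_calculation_setup}. The compatibility $\alpha_{n+1,k+ip^n}^p=\alpha_{n,k}$ says exactly that evaluation commutes with $t^{1/p^n}\mapsto (t^{1/p^{n+1}})^p$ and with pullback of functions along $\underline{p^{n+1}}\to\underline{p^n}$. Because perfectoidization is a left adjoint into perfectoid rings, $(R\abracket{t^{1/p^\infty}}/(t-r))_{\pfd}$ is the $p$-completion of $\varinjlim_n \Facst{pr}(\underline{p^n},R)$. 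Here I use \cite[Proposition 8.13]{bhatt2022Prismsa}-type compatibility, or directly the universal property: maps to a perfectoid $P$ from the completed colimit are compatible families of maps from the $S_n$. One must also check that this completed colimit is perfectoid. It is the $p$-completion of a colimit of perfectoid rings, hence perfectoid, provided it is discrete and $p$-complete in the classical sense. This holds because each $\Facst{pr}(\underline{p^n},R)$ is $p$-torsion free, being a subring of $R^{p^n}$.

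\emph{Main obstacle: identifying the completed colimit.} We must show $\big(\varinjlim_n \Facst{pr}(\underline{p^n},R)\big)^{\wedge_p}\cong \Cacst{pr}(\Z_p,R)$, the latter being the continuous functions $\Z_p\to R$ that are constant modulo $(pr)_{\pfd}$. By \Cref{acst-continous-notation}, $(\varinjlim_n \Fun(\underline{p^n},R))^{\wedge_p}=\Cont(\Z_p,R)$, so the inclusion $\varinjlim_n\Facst{pr}(\underline{p^n},R)\subseteq \Cont(\Z_p,R)$ induces a map on completions. I would argue that this map is injective with image $\Cacst{pr}(\Z_p,R)$ as follows.

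For injectivity, I would show that $\Fun(\underline{p^n},R)/\Facst{pr}(\underline{p^n},R)\cong \Fun(\underline{p^n},R/(pr)_{\pfd})/R/(pr)_{\pfd}$, using the description in \Cref{split_ramification_is_point_other_form}. The quotient $R/(pr)_{\pfd}$ is perfectoid, hence $p$-torsion free up to its characteristic-$p$ part and reduced. The hoped-for key fact is that multiplication by $p$ on $R/(pr)_{\pfd}$ has trivial or bounded kernel, so that $p$-completion is exact on the relevant sequence, in the spirit of \Cref{densityprincipleVp}. If this holds, the completion of the subring sits inside $\Cont(\Z_p,R)$ as the preimage of the constant functions in $\Cont(\Z_p,(R/(pr)_{\pfd})^\wedge)$.

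For surjectivity, given a continuous $f$ that is constant modulo $(pr)_{\pfd}$, write $f=c+\epsilon$ with $\epsilon$ continuous and valued in $(pr)_{\pfd}$. Since $(pr)_{\pfd}$ is $p$-adically closed (it is the kernel of a map to a $p$-complete ring), I would approximate $\epsilon$ modulo $p^N$ by locally constant functions valued in $(pr)_{\pfd}$. This step is where I expect the bounded torsion of $R/(pr)_{\pfd}$ to be genuinely needed, and it is the one I would check most carefully.
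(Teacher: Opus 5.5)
Your reduction to finite levels is sound: the $p$-completed colimit of the $\Facst{pr}(\underline{p^n},R)$ is perfectoid and initial under $R\abracket{t^{1/p^\infty}}/(t-r)$. The gap is the step you yourself call the main obstacle. You only argue conditionally that $\widehat{C}\cong\Cacst{pr}(\Z_p,R)$, where $C\defeq\varinjlim_n\Facst{pr}(\underline{p^n},R)$: it rests on a ``hoped-for'' torsion bound, and your surjectivity sketch misses the real difficulty. Approximating $\epsilon$ modulo $p^N$ by locally constant $(pr)_{\pfd}$-valued functions is trivial (pick one value on each fibre of a finite quotient of $\Z_p$). But this only yields a sequence in $C$ that converges in $\Cont(\Z_p,R)$. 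To get an element of $\widehat{C}$, the sequence must be Cauchy for the $p$-adic topology of $C$ itself. Writing $\mathrm{LC}\defeq\varinjlim_n\Fun(\underline{p^n},R)$, you need $p^{N+e}\mathrm{LC}\cap C\subseteq p^{N}C$ for some fixed $e$. The two topologies genuinely differ. For $r=1$ (when $R$ contains the $p$-power roots of unity) one has $p\in(pr)_{\pfd}$, so $p\,\mathrm{LC}\subseteq C$. Yet $p\,\mathrm{LC}\not\subseteq pC$: take $p$ times a non-constant $\{0,1\}$-valued locally constant function. Your injectivity step needs exactly this comparison as well.

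The missing input is standard, and you nearly wrote it down. Set $\overline{R}\defeq R/(pr)_{\pfd}\cong(R/prR)_{\pfd}$, which is perfectoid. A perfectoid ring embeds into a product of a $p$-torsion-free ring and a ring of characteristic $p$, as in the proof of \Cref{pradform}, so its $p^\infty$-torsion is killed by $p$. Now $Q\defeq\mathrm{LC}/C\cong\varinjlim_n\Fun(\underline{p^n},\overline{R})/\overline{R}$ is isomorphic to the locally constant $\overline{R}$-valued functions vanishing at $0$, so $Q[p^\infty]$ is also killed by $p$. Hence $p$-completion is exact on $0\to C\to\mathrm{LC}\to Q\to 0$: the system $(Q[p^n])_n$ has transition maps given by multiplication by $p$, hence zero (compare \Cref{densityprincipleVp}). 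Since $\overline{R}$ is $p$-complete, \Cref{acst-continous-notation} gives $\widehat{Q}\cong\Cont(\Z_p,\overline{R})/\overline{R}$. Therefore $\widehat{C}=\ker\bigl(\Cont(\Z_p,R)\to\Cont(\Z_p,\overline{R})/\overline{R}\bigr)=\Cacst{pr}(\Z_p,R)$, which gives injectivity and surjectivity at once; your separate approximation argument becomes unnecessary.

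With this filled in, your proof is complete and differs from the paper's. The paper never completes the colimit of finite-level answers by hand. It takes the uncompleted colimit of the finite-level isogenies to see that $R[t^{1/p^\infty}]/(t-r)\to\Cont(\Z_p,R_0)$ is an isomorphism away from $pr$. It then applies \cite[Proposition 2.9]{struc_pfd} to get a pullback square with corners $\Cont(\Z_p,R)$, $R/(pr)_{\pfd}$ and $\Cont(\Z_p,R/(pr)_{\pfd})$, whose fibre product is $\Cacst{pr}(\Z_p,R)$ by definition. That route is shorter because the cited proposition absorbs the completion issue. Yours is more self-contained and proves the ``$p$-completed colimit'' description in the statement directly.
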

\begin{proof} Let $A_n=R[t^{1/p^n}]/(t-r)$.
    We get for every $n\in \N$ a pullback square
    \[\begin{tikzcd}
{(A_n)_{\pfd}=\Facst{pr}(\underline{p^n},R)} \arrow[r] \arrow[d] & {\Fun(\underline{p^n},R)} \arrow[d]   \\
R/(pr)_{\pfd} \arrow[r]                                            & {\Fun(\underline{p^n},R/(pr)_{\pfd})}
\end{tikzcd}\]
realizing the perfectoidization as explained in \Cref{split_ramification_is_point}. 

Taking the (uncompleted) directed colimit on $n$, we get that the map 
$$R[t^{1/p^{\infty}}]/(t-r)\to \Cont(\Z_p,R_0)$$
where $R_0=R$ but equipped with the discrete topology, is an isomorphism away from $pr$. Now, using \Cref{acst-continous-notation} and \cite[Proposition 2.9]{struc_pfd} we have a pullback square
\[\begin{tikzcd}
\left(R\abracket{t^{1/p^{\infty}}}/(t-r)\right)_{\pfd} \arrow[r] \arrow[d] & {\Cont(\Z_p,R)} \arrow[d]  \\
R/(pr)_{\pfd} \arrow[r]                                                                    & {\Cont(\Z_p,R/(pr)_{\pfd})}
\end{tikzcd}\]
which now concludes.
\end{proof}

\begin{rem}We explain the claim in \cite[Remark 4.9]{struc_pfd}. Namely let $R$ be a $p$-torsion free perfectoid ring. We consider the perfectoidization of the semiperfectoid 
\[R\abracket{t^{1/p^\infty}}/(t-1).\]
in the setup from \Cref{semipfd_split_calculation}.
Assume that there exists a system of compatible system of $p$-power roots of 1 in $R$ and make choices as in \Cref{semipfd_split_calculation_setup} for roots of $t^{p^n}-1$.

Note that $\sigma\defeq \lim_{n\to \infty}(t^{1/p^n}-1)^{p^n}$ is an element of $R\abracket{t^{1/p^{\infty}}}$ which admits a compatible sequence of $p$-power roots given by shifting the limit that we denote by $\sigma^{\flat}$. 
We also have the property that
\[t-1\equiv \sigma \mod pR\abracket{t^{\infty}}.\]
Moreover, when viewing $R^{\flat}\abracket{t^{1/p^{\infty}}}=\varprojlim_{(-)^p}R/pR[t^{1/p^{\infty}}]$, we see that we can identify $\sigma^{\flat}$ to $t-1\in R^{\flat}\abracket{t^{1/p^{\infty}}}$.


We claim that $\sigma\not \in (t-1)_{\pfd}$. Namely if the opposite was true, then using \cite[Lemma 4.6]{struc_pfd} we would have $\sigma^{\flat}\in (t-1)_{\pfd}^{\flat}$ and therefore $t-1\in ((t-1)_{\pfd})^{\flat}\subseteq R^{\flat}\abracket{t^{1/p^{\infty}}}$. Using \Cref{semipfd_split_calculation} we see that $(t-1)_{\pfd}^{\flat}$ is the kernel of
\begin{equation}\label{tilt_t-1_pfd}
    R^{\flat}\abracket{t^{1/p^{\infty}}}\to \Cacst{p^{\flat}}(\Z_p,R^{\flat}).
\end{equation}
Let us explain precisely the map in \Cref{tilt_t-1_pfd}: $p^{\flat}$ denotes any $p$-compatible sequence of $p$-power roots of some element $pu\in R$ where $u$ is a unit. The completion on the left is a $p^{\flat}$-completion. The ring $R^{\flat}$ is equipped with the $p^{\flat}$-topology which coincides with the limit topology when we see $R^{\flat}=\varprojlim_{(-)\mapsto(-)^p} R$ and equip $R$ with the $p$-adic topology. Using this identification and the universal property of limits we see that $\Cont(\Z_p,R^{\flat})=\Cont(\Z_p,R)^{\flat}$. Here ``$p^{\flat}-\acst$'' means that we are looking at functions that are constant modulo $\sqrt{p^{\flat} R^{\flat}}$. Recall that tilting a surjection is still surjective (see the proof of \cite[Lemma 4.6]{struc_pfd}) -- the map in \Cref{tilt_t-1_pfd} is the tilt of the morphism of perfectoid $R$-algebras $R\abracket{t^{1/p^{\infty}}}\to R\abracket{t^{1/p^{\infty}}}/(t-1)_{\pfd}$.


This surjective map is given by sending $t$ to the function $\Cont(\Z_p,R^{\flat})=\Cont(\Z_p,R)^{\flat}$ given by the sequence of functions from \Cref{semipfd_split_calculation} $(f_n)_{n\in \N}\in \Cont(\Z_p,R)^{\flat}$ satisfying $f_{n+1}^p=f_n$ for every $n\in \N$. In particular, we explain below why $t$ is not sent to the constant function $1$. Say with our identification $\Z_p=\varprojlim_{n\in \N}\underline{p^n}$, that $1\in \Z_p$ correspond to the system given by $1\in \underline{p^n}$ for each $n$ and similarly for $2$. Then in $R^{\flat}=\varprojlim_{(-)\mapsto(-)^p} R$
$$t(1)=(f_n(1))\neq (f_n(2))=t(2)$$
because $f_1(1)=\alpha_{1,1}$ and $f_2(2)=\alpha_{1,2}$ which are distinct roots of $t^p-1$ in the notations of \Cref{semipfd_split_calculation_setup}. This concludes.

\end{rem}

\begin{example}\label{zp1_pfd} As a follow up to \Cref{mupn_pfd} we mention that using the same argument the Hopf-algebra structure on 
\[R\abracket{t^{1/p^{\infty}}}/(t-1)\]
which is the $p$-complete ring of functions of the $p$-complete formal $R$-group scheme 
$$\Z_p(1)_R=\varprojlim_{n\in \N}\mu_{p^n}$$
also induces an Hopf-algebra structure on its perfectoidization. Namely, in the setup of \Cref{semipfd_split_calculation} it's perfectoidization is identified to
\[\Cacst{p}(\Z_p,R)\]
and the Hopf-algebra structure is induced by the group law on $\Z_p$.
\end{example}

\end{document}